\newtheorem{theorem}{Theorem}[section]
\newtheorem{proposition}[theorem]{Proposition}
\newtheorem{lemma}[theorem]{Lemma}
\newtheorem{corollary}[theorem]{Corollary}
\theoremstyle{definition}
\newtheorem{remark}[theorem]{Remark}
\newtheorem{example}[theorem]{Example}
\def\F{\mathbb{F}}
\def\Q{\mathbb{Q}}
\def\Z{\mathbb{Z}}
\def\1{\mathbf{1}}
\def\<{\langle}
\def\>{\rangle}
\DeclareMathOperator{\mdeg}{mdeg}
\DeclareMathOperator{\SL}{SL}
\newcommand\nullset\varnothing
\begin{document}
	
	\title{Differential operators, anisotropy, and simplicial spheres}           
	
	\author{Kalle Karu, Matt Larson, and Alan Stapledon}

	\address{University of British Columbia}
	\email{karu@math.ubc.ca}
	
	\address{Princeton University and the Institute for Advanced Study}
	\email{mattlarson@princeton.edu}
	
	\address{Sydney Mathematics Research Institute}
	\email{astapldn@gmail.com}

	\begin{abstract}
	We find identities involving differential operators in the generic artinian reduction of the Stanley--Reisner ring of a simplicial sphere in any positive characteristic. These identities generalize the characteristic $2$ identities used by Papadakis and Petrotou to give a proof of the algebraic $g$-conjecture. We 
	show that 
	these identities are a shadow of an identity on the degree map, and we 
	use them 
	to prove the anisotropy of certain forms on the generic artinian reduction of the Stanley--Reisner ring and to prove weak Lefschetz results. 
	\end{abstract}
	
		\maketitle
	
	\vspace{-20 pt}
	
	\setcounter{tocdepth}{1}
	
	\section{Introduction}

In \cite{PapadakisPetrotoug}, Papadakis and Petrotou found some identities involving differential operators in the generic artinian reduction of the Stanley--Reisner ring of a simplicial sphere over a field of characteristic $2$, and they used those identities to give a proof of the algebraic $g$-conjecture for simplicial spheres. They conjectured a generalization of their identities, and this conjecture was proven in \cite{KaruXiaoAnisotropy}. These identities have enabled much recent progress in the study of simplicial spheres \cite{APP,LNS,Oba}.

We show that these identities are 
a shadow of 
 an identity on the degree map
that we prove in Theorem~\ref{thm:exact} below. 
Furthermore, we generalize this identity to generic artinian reductions of Stanley--Reisner rings of simplicial
spheres over a field of characteristic $p$ for any prime $p$.
We use these identities to deduce algebraic properties of such generic artinian reductions, including proving conjectures of Adiprasito, Papadakis, and Petrotou. 
Our results hold for all  connected oriented simplicial pseudomanifolds, and we work in that level of generality. 
Recall that a  simplicial pseudomanifold of dimension $d - 1$ is a pure simplicial complex $\Delta$ of dimension $d - 1$ 
such that every $(d - 2)$-dimensional face lies in exactly two facets, and each connected component of the geometric realization of $\Delta$ remains connected after we remove its $(d - 3)$-skeleton.

  Let $k$ be a field.
  Let $\Delta$ be a  connected  simplicial pseudomanifold of dimension $d - 1$ that is oriented over $k$  and has vertex set $V = \{1, \dotsc, n\}$. 
  Set $K = k(a_{i,j})_{1 \le i \le d, \, 1 \le j \le n}$. Let $K[\Delta]$ be the Stanley--Reisner ring of $\Delta$, and  set $\mu_i = \sum_{j} a_{i,j} x_j$ for $i \in \{1, \dotsc, d\}$, so $\mu_1, \dotsc, \mu_d$ is a linear system of parameters for $K[\Delta]$. Let $H(\Delta) \coloneqq K[\Delta]/(\mu_1, \dotsc, \mu_d)$ be the generic artinian reduction of $K[\Delta]$. Then $H^d(\Delta)$ is $1$-dimensional, and there is a degree map 
\[ \deg \colon K[x_1,\ldots, x_n]_d \to H^d(\Delta) \longrightarrow K\]
which induces a distinguished isomorphism from $H^d(\Delta)$ to $K$, that, with a slight abuse of notation, we also denote by $\deg$. Here $K[x_1,\ldots, x_n]_d$ denotes the degree $d$  part of the polynomial ring.  The degree map has the following concrete description: $H^d(\Delta)$ is generated as a $K$-vector space by the monomial $x^F$ associated to any facet $F = \{j_1 < \dotsb < j_d\}$ of $\Delta$. Fix an orientation of $\Delta$. This gives an ordering of the vertices of $F$, up to even permutation. Let $\epsilon_F \in \{ 1, -1 \}$ be the sign of the permutation that takes $(j_1,\ldots,j_d)$ to this ordering. Let $[F]$ denote the determinant of the $d \times d$ matrix whose $(i,m)$th entry is $a_{i,j_m}$. Then $\deg(x^F) = \epsilon_F/[F] \in K$.  

Let $L = (\nu_{i,j})$ be an $m \times n$ matrix of nonnegative integers for some positive integer $m$, i.e., the columns of $L$ are indexed by the vertices of $\Delta$. 
 Set 
\[ x^L \coloneqq \prod_{i,j} x_j^{\nu_{i,j}} = \prod_j x_j^{\sum_i \nu_{i,j}}.\] 
For example, if $F$ is a facet and $L$ is the $1 \times n$ matrix whose $j$th entry is $1$ if $j \in F$ and $0$ otherwise, then $x^L = x^F$.
We use $(x^L)^{1/p}$ to denote the monomial whose $p$th power is $x^L$ if such a monomial exists and zero otherwise. If $I = (I_{i,j})$ is a $d \times n$ matrix of nonnegative integers, then set
$$a^I \coloneqq \prod_{i,j} a_{i,j}^{I_{i,j}} \text{ and }I! \coloneqq \prod_{i,j} (I_{i,j})!. $$

\begin{theorem}\label{thm:exact} 
  Let $k$ be a field  of characteristic $p$.
  Let $\Delta$ be a  connected  simplicial pseudomanifold of dimension $d - 1$ that is oriented over $k$  and has vertex set $V = \{1, \dotsc, n\}$.
Let $J$ be a $1 \times n$ matrix of nonnegative integers with row sum $d$. Then
$$\deg(x^J) = (-1)^d \sum_I \deg((x^I \cdot x^J)^{1/p})^p \frac{a^I}{I!},$$
where $I$ varies over all $d \times n$ matrices of nonnegative integers whose row sums are all $p - 1$. 
\end{theorem}

In a different context, a similar identity appeared in \cite{APPS}. 
For each $i \in \{1, \dotsc, d\}$ and $j \in V$, the differential operator $\frac{\partial}{\partial a_{i,j}}$ acts on $K$. 
If $I = (I_{i,j})$ is a $d \times n$ matrix of nonnegative integers, define 
 \[ \partial^I \coloneqq \prod_{i,j} \left(\frac{\partial}{\partial a_{i,j}}\right)^{I_{i,j}}.\]
	In characteristic $p$, partial derivatives commute with $p$th powers. 
If all row sums of $I$ are $p-1$, then after applying $\partial^I$ to the identity in Theorem~\ref{thm:exact}, exactly one term on the right hand side survives, giving the following result.

 \begin{theorem}\label{thm:4.1} 
   Let $k$ be a field  of characteristic $p$.
   Let $\Delta$ be a  connected  simplicial pseudomanifold of dimension $d - 1$ that is oriented over  $k$   and has vertex set $V = \{1, \dotsc, n\}$.
 Let $I$ be a $d \times n$ matrix of nonnegative integers such that all row sums are $p - 1$. Let $J$ be a $1 \times n$  matrix of nonnegative integers  with row sum $d$. Then
 	\[
 	\partial^I \deg(x^J) = (-1)^d \deg( (x^I \cdot x^J)^{1/p} )^p.
 	\]	
 \end{theorem}

We will deduce Theorem~\ref{thm:exact} from Theorem~\ref{thm:4.1}, showing that, in fact, Theorem~\ref{thm:exact} and Theorem~\ref{thm:4.1} are equivalent. 
When $p=2$, the above theorem was conjectured in \cite[Conjecture~14.1]{PapadakisPetrotoug} and proved in \cite[Theorem~4.1]{KaruXiaoAnisotropy}, and it generalizes \cite[Proposition 5.1 and 5.7]{PapadakisPetrotoug}. 

\begin{example}
	Suppose that $d = 2$, $p = 3$, and $F = \{ 1 < 2 \}$ is a facet. Let $I = \begin{bmatrix}
		2 & 0 & 0 & \cdots \\
		0 & 2 & 0 & \cdots \\
	\end{bmatrix}$ and $J = (1,1,0,\cdots)$. Then Theorem~\ref{thm:4.1} says that $\partial^I \deg(x^F) = \deg(x^F)^3$. This corresponds to the fact that 
	$\left(\frac{\partial}{\partial a_{1,1}}\right)^2 \left(\frac{\partial}{\partial a_{2,2}}\right)^2 \frac{1}{a_{1,1}a_{2,2} - a_{2,1}a_{1,2}} = \frac{1}{(a_{1,1}a_{2,2} - a_{2,1}a_{1,2})^3}$ over a field of characteristic $3$.  
\end{example}

\smallskip
We use Theorem~\ref{thm:4.1} to deduce results about a quotient of $H(\Delta)$. 
Let $\overline{H}(\Delta)$ denote the \emph{Gorenstein quotient} of $H(\Delta)$, i.e., the quotient by the kernel of the pairing $H^{\bullet} \times H^{d - \bullet} \to K$ given by $(g, h) \mapsto \deg(g \cdot h)$. For example, if $\Delta$ is a homology sphere over $k$, then $\overline{H}(\Delta) = H(\Delta)$. 
Set $\ell = x_1 + \dotsb + x_n \in \overline{H}^1(\Delta)$. 

\begin{theorem}\label{thm:4.4}
  Let $k$ be a field  of characteristic $p$.
  Let $\Delta$ be a  connected  simplicial pseudomanifold of dimension $d - 1$ that is oriented over  $k$.
 Then for any nonzero $g \in \overline{H}^m(\Delta)$ with $m \le d/p$, 
$\ell^{d - pm} \cdot g^p$ is nonzero.
\end{theorem}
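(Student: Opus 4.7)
The plan is to argue by contradiction using Theorem~\ref{thm:4.1}. Suppose $g \in H^m$ lifts a nonzero class in $\overline{H}^m$, and assume for contradiction that $\ell^{d-pm} g^p = 0$ in $H^d$, i.e., $\deg(\ell^{d-pm} g^p) = 0$ as a rational function in the $a_{i,j}$. (This vanishing is independent of the choice of lift, since the kernel of $H \to \overline{H}$ is an ideal, hence closed under Frobenius.) Writing $g = \sum_{|\alpha|=m} c_\alpha x^\alpha$ with $c_\alpha \in K$, Frobenius additivity in characteristic $p$ gives $g^p = \sum_\alpha c_\alpha^p x^{p\alpha}$, so after expanding $\ell^{d-pm} = \sum_\beta \binom{d-pm}{\beta} x^\beta$ the hypothesis reads
\[
0 \;=\; \sum_{\alpha, \beta} c_\alpha^p \binom{d-pm}{\beta} \deg(x^{p\alpha+\beta}) \qquad \text{in } K.
\]

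Now I would apply $\partial^I$ for an arbitrary $d \times n$ matrix $I$ of nonnegative integers with row sums $p-1$. Each $c_\alpha^p$ is a $p$-th power in $K$, and $\partial_{a_{i,j}}(f^p) = p f^{p-1} \partial_{a_{i,j}} f = 0$ in characteristic $p$, so these scalars commute with $\partial^I$. Theorem~\ref{thm:4.1} applied with $J = p\alpha + \beta$, together with the identity $(x^I x^{p\alpha+\beta})^{1/p} = x^\alpha (x^I x^\beta)^{1/p}$, yields
\[
\partial^I \deg(x^{p\alpha+\beta}) \;=\; (-1)^d \deg\bigl(x^\alpha (x^I x^\beta)^{1/p}\bigr)^p.
\]
Since $\binom{d-pm}{\beta} \in \F_p$ satisfies $c^p = c$, each summand takes the form $\bigl(c_\alpha \binom{d-pm}{\beta}\deg(\cdots)\bigr)^p$, and Frobenius additivity collapses the whole sum into a single $p$-th power:
\[
0 \;=\; \partial^I \deg(\ell^{d-pm} g^p) \;=\; (-1)^d \deg(g \cdot R_I)^p,
\]
where $R_I := \sum_\beta \binom{d-pm}{\beta}(x^I x^\beta)^{1/p}$ is an element of degree $d-m$. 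Consequently $\deg(g \cdot R_I) = 0$ for every admissible $I$.

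The crux, and what I expect to be the main obstacle, is a spanning lemma: as $I$ ranges over all $d \times n$ matrices with row sums $p-1$, the classes $\{R_I\}$ span $\overline{H}^{d-m}$ as a $K$-vector space. Granting this, $g$ pairs trivially with all of $\overline{H}^{d-m}$ under the Gorenstein pairing, so $g = 0$ in $\overline{H}^m$, contradicting our assumption. To prove the spanning, for each face-supported monomial $x^\gamma$ of degree $d-m$ satisfying $(p-1)|\supp\gamma| \ge d-pm$, I would choose $\beta \in \{0,\ldots,p-1\}^n$ supported on $\supp\gamma$ with $|\beta| = d-pm$ and $\binom{d-pm}{\beta}\not\equiv 0 \pmod p$ (via Lucas's theorem), and take $I$ to have column sums $p\gamma - \beta$ (existence by a standard transportation argument); the restriction $\beta_j \le p-1$ then forces every other lattice point congruent to $\beta$ modulo $p$ to be infeasible, so that $R_I = \binom{d-pm}{\beta} x^\gamma$ exactly. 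For monomials with support too small for this, I would use the linear parameter relations $\mu_i = 0$ in $H$ to rewrite $x^\gamma$ as a $K$-combination of monomials with strictly larger support, reducing to the previous case and completing the proof.
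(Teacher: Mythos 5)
The derivation of $\deg(g\cdot R_I)=0$ from Theorem~\ref{thm:4.1} is correct, and reducing Theorem~\ref{thm:4.4} to the claim that the classes $R_I$ span $\overline{H}^{d-m}(\Delta)$ is a reasonable plan. However, the spanning construction you propose breaks down except in a boundary case. To extract $R_I = \binom{d-pm}{\beta}\,x^\gamma$ you need $\beta \in \{0,\ldots,p-1\}^n$ so that $\beta$ is the unique multi-index in its residue class mod $p$ with $|\beta|=d-pm$ and nonnegative entries; but then by Kummer's theorem $\nu_p\binom{d-pm}{\beta} = \frac{\sum_j s_p(\beta_j) - s_p(d-pm)}{p-1} = \frac{(d-pm) - s_p(d-pm)}{p-1}$, since $s_p(\beta_j)=\beta_j$ when $\beta_j\leq p-1$. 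This is zero if and only if $d-pm < p$. So for $d-pm\geq p$ \emph{every} such multinomial coefficient vanishes mod $p$, your $R_I$ collapses to $0$ rather than a nonzero multiple of $x^\gamma$, and the spanning lemma fails outright. (Concretely: $p=2$, $d=4$, $m=1$ forces $\binom{2}{1,1}\equiv 0$.) Allowing entries of $\beta$ to reach $p$ or higher rescues the coefficient but destroys uniqueness, and $R_I$ becomes a genuine sum whose span is unclear.

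The paper avoids this obstruction precisely by never expanding a power of $\ell$ bigger than $p-1$. It first upgrades Theorem~\ref{thm:4.1} to Corollary~\ref{cor:4.2}, which handles $h^p\cdot x^J$ for an arbitrary ring element $h$ (not just a monomial), and then proves Corollary~\ref{cor:4.3}: if $\ell^r g^p = 0$ with $0\leq r\leq d-pm$, write $r = tp+s$ with $0\leq s<p$ and factor $\ell^r g^p = (\ell^t g)^p\,\ell^s$, absorbing $(\ell^t)^p$ into the Frobenius argument of Corollary~\ref{cor:4.2} and expanding only $\ell^s$. Since $s<p$, the multinomial coefficients arising are honest nonzero elements of $\F_p$, and a careful choice of $I,J,M$ isolates a single term and yields $\ell^t g = 0$. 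Theorem~\ref{thm:4.4} then follows by a short induction on $r$, showing $\ell^r g^p\neq 0$ for all $r\leq d-pm$. If you want to pursue your direct route, you would have to either verify the spanning of the full collection $\{R_I\}$ (whose nonzero coefficients now involve $\beta$ with large entries, and the resulting combinatorics is not at all clear), or adopt the paper's iterated ``peel off a $p$th power of $\ell$'' structure.
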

I.e., the form on $\overline{H}^m(\Delta)$ given by $g \mapsto \deg(\ell^{d - pm} \cdot g^p)$ is \emph{anisotropic}. When $p = 2$, this result recovers \cite[Theorem~4.4]{KaruXiaoAnisotropy}.

In particular, Theorem~\ref{thm:4.4} shows that if $g \in \overline{H}^m(\Delta)$ is nonzero for some $m \le d/p$, then $g^p$ is nonzero. This proves \cite[Conjecture 7.3]{APP}. 
Also, $\ell^{d - pm} \cdot g$ is nonzero, so 
the map from $\overline{H}^m(\Delta)$ to $\overline{H}^{d - (p-1)m}(\Delta)$ given by multiplication by $\ell^{d - pm}$ is injective. In particular, the map from $\overline{H}^m(\Delta)$ to $\overline{H}^{m + 1}(\Delta)$ given by multiplication by 
 $\ell$ is injective for $m \le \frac{d - 1}{p}$. This implies that 
the weak Lefschetz property holds for $\overline{H}(\Delta)$ in degrees at most  
$\lfloor (d-1)/p \rfloor$, 
and $\dim \overline{H}^0(\Delta) \le \dotsb \le \dim \overline{H}^{\lfloor (d-1)/p \rfloor}(\Delta)$. 
Even for $\mathbb{Q}$-homology spheres, these inequalities cannot in general be proven using the characteristic $2$ results of \cite{PapadakisPetrotoug} or \cite{KaruXiaoAnisotropy}, see Example~\ref{ex:RPd}. 
A proof of the strong Lefschetz property for $\overline{H}(\Delta)$ when $\Delta$ is a $\mathbb{Q}$-homology sphere was announced in \cite{APP}, see also \cite{Adiprasitog}. 

\smallskip

We can use Theorem~\ref{thm:4.1} to deduce results when $k$ has characteristic $0$ as well. For this, we need to know that the dimension of $\overline{H}(\Delta)$ does not depend on the characteristic of $k$.
 A result of Novik and Swartz \cite{NovikSwartzGorensteinRings}  guarantees that this holds when $\Delta$ 
 is a connected oriented simplicial $\Z$-homology manifold whose homology has no torsion. 
 Recall that  a $(d - 1)$-dimensional simplicial complex $\Delta$ is an $A$-homology manifold for some abelian group $A$ if the link of every nonempty face $G$ of $\Delta$ has the same homology as a sphere of dimension $d - |G| - 1$ over $A$.
  We can then prove the following result, which answers the question in \cite[Remark 14.3]{PapadakisPetrotoug} when $k$ has characteristic $0$.

\begin{theorem}\label{thm:tpower}
  Let $k$ be a field  of characteristic $0$.
  Let $\Delta$ be a  connected  simplicial $\Z$-homology manifold of dimension $d - 1$ that is oriented over $k$.
Assume that the integral homology of $\Delta$ is torsion-free.
For any $t \ge 2$ and nonzero $g \in \overline{H}^m(\Delta)$ with $m \le d/t$, $\ell^{d - tm} \cdot g^t$ 
is nonzero. 
\end{theorem}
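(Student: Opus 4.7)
The plan is to reduce Theorem~\ref{thm:tpower} to Theorem~\ref{thm:4.4} by combining an algebraic induction on the factorization of $t$ with a reduction-modulo-$p$ transfer enabled by the Novik--Swartz theorem, which says that the dimensions of $\overline{H}^\bullet(\Delta)$ are independent of the characteristic of the base field under the torsion-free $\Z$-homology manifold hypothesis.

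\emph{Step 1: Multiplicativity in $t$.} The conclusion is compatible with products of exponents: if the theorem is known for $t_1$ and for $t_2$, it follows for $t = t_1 t_2$. Indeed, given a nonzero $g \in \overline{H}^m(\Delta)$ with $m \le d/(t_1 t_2)$, the bound $m \le d/t_1$ holds, so the case $t_1$ applied to $g$ yields $\ell^{d - t_1 m}\cdot g^{t_1} \neq 0$, and in particular $g^{t_1} \neq 0$ in degree $t_1 m$. Since $t_1 m \le d/t_2$, the case $t_2$ applied to $g^{t_1}$ then gives
\[
\ell^{d - t_2(t_1 m)} \cdot (g^{t_1})^{t_2} \;=\; \ell^{d - tm} \cdot g^t \;\neq\; 0.
\]
Together with the case $t = 1$ (which follows from an injectivity statement for $\ell^{d-m}$ on $\overline{H}^m(\Delta)$, e.g. from the strong Lefschetz property announced in \cite{APP}), this reduces the theorem to the case where $t = p$ is prime.

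\emph{Step 2: Reduction mod $p$.} Fix a prime $p$ and consider the universal integral model: set $A = \Z[a_{i,j}]$ and form the graded $A$-module $H_A^\bullet(\Delta) = A[\Delta]/(\mu_1, \dotsc, \mu_d)$, together with the Gorenstein quotient $\overline{H}_A^\bullet(\Delta)$ defined by the integer-valued pairing against the determinantal degree map. By generic freeness combined with the Novik--Swartz equality of $\dim_F \overline{H}^i(\Delta)_F$ in all characteristics, after localizing $A$ at some nonzero $f$ coprime to $p$, each $\overline{H}_{A_f}^i(\Delta)$ becomes a free $A_f$-module of the expected rank whose formation commutes with base change to both $\Q(a_{i,j})$ and $\F_p(a_{i,j})$. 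Given a nonzero $g \in \overline{H}^m(\Delta)_k$, after passing to a finitely generated field of definition and clearing denominators we may realize $g$ as the image of an element $\tilde g \in \overline{H}_{A_f}^m(\Delta)$ whose coordinates in a chosen free basis have greatest common divisor coprime to $p$. Its reduction $\bar g \in \overline{H}^m(\Delta)_{\F_p(a_{i,j})}$ is then nonzero, and Theorem~\ref{thm:4.4} yields $\ell^{d-pm} \cdot \bar g^p \neq 0$; flatness of the integral model then lifts this to $\ell^{d-pm}\cdot g^p \neq 0$ in characteristic zero.

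\emph{Main obstacle.} The principal technical point is Step 2: one must produce a flat integral model of the Gorenstein quotient so that the reduction map $\overline{H}_{A_f}^\bullet \to \overline{H}^\bullet(\Delta)_{\F_p(a_{i,j})}$ is well-behaved. This is exactly where the Novik--Swartz theorem is essential: without the characteristic-independence of the dimensions of $\overline{H}^\bullet(\Delta)$, the kernel of the degree pairing could jump upon reduction modulo $p$, so that a nonzero characteristic-zero class $g$ might become zero mod $p$ and break the transfer. The hypotheses that $\Delta$ is a $\Z$-homology manifold with torsion-free integral homology are precisely what rule out this pathology.
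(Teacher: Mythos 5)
Your high-level plan matches the paper's: reduce to the case where $t$ is prime, then transfer the characteristic-$p$ anisotropy (Theorem~\ref{thm:4.4}) to characteristic $0$ via Novik--Swartz dimension invariance. The multiplicativity in Step~1 is exactly how the paper's proof proceeds (it repeatedly applies Proposition~\ref{prop:pansitropychar0} prime by prime). However, there are two problems.

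\emph{The $t=1$ remark is wrong and unnecessary.} The statement for $t=1$ would say $\ell^{d-m}\cdot g\neq 0$ for all nonzero $g\in\overline{H}^m(\Delta)$ with $m\leq d$; this fails whenever $\dim\overline{H}^{d-1}(\Delta)>1 = \dim\overline{H}^d(\Delta)$, and it certainly does not follow from the strong Lefschetz property (which only concerns $\ell^{d-2m}$ for $m\leq d/2$). Fortunately you do not need it: the multiplicativity argument has the prime case itself as its base case, so just drop $t=1$.

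\emph{Step~2 has a genuine gap.} You propose a ``universal integral model'' over $A=\Z[a_{i,j}]$ and claim that after clearing denominators one may realize $g$ as an element $\tilde g\in\overline{H}^m_{A_f}(\Delta)$. But $g\in\overline{H}^m(\Delta)$ has coefficients in $K=k(a_{i,j})$, and $k$ is an arbitrary field of characteristic $0$: its elements need not lie in $\Q$, so $g$ need not be defined over $\Q(a_{i,j})$, let alone over $A_f$. Passing to a finitely generated field of definition $F\subseteq k$ of $g$ does not help directly, since $F$ can contain elements algebraic or transcendental over $\Q$, and one then needs a way to reduce those elements of $F$ modulo $p$ compatibly. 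This is precisely where the paper invokes the Chevalley extension theorem: it extends the $p$-adic valuation on $\Q$ to a valuation on $k$, with valuation ring $R$ and residue field $\kappa$ of characteristic $p$, and then shows (via monomial bases and invertibility of the explicit pairing matrix, using Remark~\ref{rem:specialize}) that specialization to $\kappa(a_{i,j})$ is well-behaved. Separately, even for $k=\Q$ your assertion that generic freeness produces $f$ coprime to $p$ with the stated base-change compatibility is not automatic; Novik--Swartz gives equality of ranks at the two generic points but some argument (semicontinuity, or, as in the paper, invertibility of the specialized pairing matrix) is needed to pass from that to a flat model near the generic point of $V(p)$. The paper's proof sidesteps all of this by never constructing an integral model of the whole module: it only compares a single $r\times r$ matrix over $K$ with its specialization over $\kappa(a_{i,j})$, which is much lighter machinery.
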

This result, which is new even for simplicial spheres, is proved by using Theorem~\ref{thm:4.4} for each prime $p$ which divides $t$. 

\smallskip

Our paper is organized as follows. 
In Section~\ref{sec:identity}, we prove Theorem~\ref{thm:4.1} and then use it to deduce Theorem~\ref{thm:exact}.
In Section~\ref{sec:applications}, we prove Theorem~\ref{thm:4.4} and Theorem~\ref{thm:tpower}. See \cite[Section 2]{KaruXiaoAnisotropy} for background on pseudomanifolds, orientations, and degree maps. 

\subsection*{Acknowledgements} 
This work was conducted while the second and third authors were at the Institute for Advanced Study, where the third author received support from the Charles Simonyi endowment. The first author was supported by an NSERC Discovery grant. We thank the referees for their helpful comments.

\section{Proof of the identities}\label{sec:identity}

We begin by proving Theorem~\ref{thm:4.1}. 
Our argument imitates the proof of \cite[Theorem~4.1]{KaruXiaoAnisotropy}, but it requires some new ideas, see Remark~\ref{rem:compare}.

\subsection{Reductions}
Let $k$ be a field of characteristic $p$. 
We begin the proof of 
Theorem~\ref{thm:4.1}
by reducing to the case when $\Delta$ is the boundary of a simplex. 
It was shown in \cite[Theorem~3.2]{KaruXiaoAnisotropy} that the degree map for a connected  simplicial pseudomanifold $\Delta$ that is oriented over $k$ with $M$ facets can be expressed as a sum of degree maps
\begin{equation} \label{eq-conn-sum} \deg_\Delta = \deg_{\Pi_1} + \deg_{\Pi_2} + \dotsb + \deg_{\Pi_M}.\end{equation}
Here the summands correspond to facets $\sigma_i $ of $\Delta$. Let $0$ be a new vertex. Then the complex $\Pi_i$ corresponding to  $\sigma_i = \{ j_1, \ldots, j_d\}$ is the boundary of the $d$-simplex $\{ 0, j_1, \ldots, j_d\}$. The orientation on $\Pi_i$ is such that the orientations on $\sigma_i$ agree in $\Delta$ and $\Pi_i$. The map $\deg_{\Pi_i}$ is viewed as a function on $K[x_1,\ldots,x_n]_d$ via the composition
\[ K[x_1,\ldots,x_n]_d \to K[x_0, x_{j_1},\ldots, x_{j_d}] \stackrel{ \deg_{\Pi_i}}{\longrightarrow} K,\]
where the left map sets all $x_j$ that do not lie in $\{ x_{j_1}, \ldots, x_{j_d}\}$ equal to zero.

\begin{lemma}\label{lem:4.5} 
If the conclusion of Theorem~\ref{thm:4.1} holds for all $\Pi_i$, then it holds for $\Delta$.
\end{lemma}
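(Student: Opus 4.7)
The plan is to apply the connected sum decomposition \eqref{eq-conn-sum} and prove the identity term-by-term for each $\Pi_i$, then sum. By linearity, $\partial^I \deg_\Delta(x^J) = \sum_i \partial^I \deg_{\Pi_i}(x^J)$, and since $K$ has characteristic $p$, the Freshman's dream gives
\[ \deg_\Delta((x^I \cdot x^J)^{1/p})^p = \Bigl(\sum_i \deg_{\Pi_i}((x^I \cdot x^J)^{1/p})\Bigr)^p = \sum_i \deg_{\Pi_i}((x^I \cdot x^J)^{1/p})^p. \]
So it suffices to establish the identity $\partial^I \deg_{\Pi_i}(x^J) = (-1)^d \deg_{\Pi_i}((x^I \cdot x^J)^{1/p})^p$ for each facet $\sigma_i$ of $\Delta$ separately.

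The key observation is that $\deg_{\Pi_i}$ depends on the variables $a_{j, k}$ only for $k \in \{0\} \cup \sigma_i$, since these are the only variables appearing in the linear system of parameters for $\Pi_i$. Combined with the fact that the composition defining $\deg_{\Pi_i}$ sends $x^J$ to zero whenever $\supp(J) \not\subseteq \sigma_i$, this shows that $\partial^I \deg_{\Pi_i}(x^J)$ vanishes unless $\supp(J) \subseteq \sigma_i$ and every nonzero entry of $I$ lies in a column indexed by a vertex of $\sigma_i$. The same combined condition $\supp(x^I \cdot x^J) \subseteq \sigma_i$ is precisely what is needed for the right-hand side to be nonzero, so both sides of the term-by-term identity vanish trivially when it fails.

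In the remaining case, the row sums of $I|_{\sigma_i}$ and $J|_{\sigma_i}$ are still $p - 1$ and $d$, so I would extend them by zero columns for the new vertex $0$ to obtain matrices $I''$ of size $d \times (d + 1)$ and $J''$ of size $1 \times (d + 1)$ of the correct shape for Theorem~\ref{thm:4.1} applied to $\Pi_i$. Under the support condition, one checks that $\partial^I \deg_{\Pi_i}(x^J) = \partial^{I''} \deg_{\Pi_i}(x^{J''})$ and $(x^I \cdot x^J)^{1/p} = (x^{I''} \cdot x^{J''})^{1/p}$, so Theorem~\ref{thm:4.1} for $\Pi_i$ yields exactly the desired term-by-term identity. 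The main work is the vanishing/support analysis, but this amounts to careful bookkeeping once one sees that $\deg_{\Pi_i}$ only involves the $a_{j, k}$ with $k \in \{0\} \cup \sigma_i$.
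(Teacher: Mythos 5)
Your proposal is correct and takes essentially the same route as the paper: decompose via \eqref{eq-conn-sum}, use linearity of $p$th powers in characteristic $p$, and verify the identity summand by summand with the same case analysis (both sides vanish when $\supp(x^I \cdot x^J) \not\subseteq \sigma_i$, and otherwise Theorem~\ref{thm:4.1} for $\Pi_i$ applies directly). The only difference is that you spell out the re-indexing into $d \times (d+1)$ and $1 \times (d+1)$ matrices $I''$, $J''$ explicitly, which the paper leaves implicit.
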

\begin{proof}
After substituting in (\ref{eq-conn-sum}) and using linearity of $p$th powers, the statement of Theorem~\ref{thm:4.1}  is 
\[ \sum_{i=1}^{M} \partial^I \deg_{\Pi_i} (x^J) = \sum_{i=1}^{M} (-1)^d \deg_{\Pi_i}( (x^I \cdot x^J)^{1/p} )^p.\]
Using the conclusion of Theorem~\ref{thm:4.1} for each $\Pi_i$, we check that the corresponding summands on both sides are equal. If $x^J$ involves a variable $x_j$ such that $j$ is not a vertex of $\Pi_i$, then $\deg_{\Pi_i}$ vanishes on both sides. Similarly, if $x^I$ involves such a variable $x_j$, then the term on the right hand side vanishes; on the left hand side, $\deg_{\Pi_i}$ does not depend on variables $a_{i,j}$, so the derivative vanishes. 
In the case when $x^I x^J$ only involves variables $x_j$ 
such that $j$ is a vertex of $\Pi_i$, the two sides are equal by Theorem~\ref{thm:4.1} applied to $\Pi_i$.
\end{proof}

From now on, we will assume that $\Delta = \Pi$ is the boundary of a $d$-dimensional simplex with vertices $\{ 0, 1, \ldots, d \}$. Let  $A = (a_{i,j})_{1 \le i \le d, 0 \le j \le d}$, and note that we index the columns by $0,1,\ldots, d$. We use a similar convention 
for the matrices $I$ and $J$ in the statement of Theorem~\ref{thm:4.1}. 

The degree map for $\Delta=\Pi$ has a  simple form. For $0 \le j \le d$, let $X_j$ be $(-1)^j$ times the determinant of $A$ with the $j$th column removed. We write $X^J$ for a monomial in the $X_j$.  If $x^J$ has degree $d$, then 
\begin{equation} \label{eq-degPi}  
\deg_\Pi(x^J) = \frac{X^J}{X_0 X_1 \cdots X_d}.
\end{equation}
This formula, together with (\ref{eq-conn-sum}), can be used to give an explicit expression for the degree map $\deg_\Delta$. The resulting formula is the same as the one given by Lee \cite{Lee}. It can also be deduced from the integration map of Brion \cite{BrionStructurePolytopeAlgebra}. 

\begin{remark}\label{rem:specialize}
Note that the determinants $X_j$ lie in $\Z[a_{i,j}]$, and the expression for $\deg_\Delta$ is independent of the field $k$. This can be used to specialize the degree map from characteristic $0$ to characteristic $p$. For any $g\in \Z[a_{i,j}][x_j]$, we obtain the same result if we first reduce $g$ mod $p$ and then compute its degree in $\F_p(a_{i,j})$, or if we first compute its degree in $\Q(a_{i,j})$ and then reduce this mod $p$.
\end{remark}

The next theorem is a stronger version of Theorem~\ref{thm:4.1} in the case where $\Delta=\Pi$. When $p=2$, it was proved in \cite[Theorem~4.6]{KaruXiaoAnisotropy}. 
 
 \begin{theorem}\label{thm:4.6} 
 Let $I$ be a $d \times (d+1)$ matrix of nonnegative integers such that all row sums are $p - 1$. Let $J$ be a $1 \times (d+1)$  matrix of nonnegative integers  whose row sum is congruent to $-1  \pmod{p}$. Then
\[
\partial^I X^J = (-1)^d X^{pL}
\]
if there exists an integer vector $L$ such that $X^I X^J = X^{pL} (X_0 \cdots X_{d})^{p - 1}$, and $\partial^I X^J = 0$ otherwise.
\end{theorem}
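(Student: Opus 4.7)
The plan is to adapt the proof of the characteristic $2$ case in \cite[Theorem~4.6]{KaruXiaoAnisotropy} by replacing binary combinatorics with identities valid in general characteristic $p$.

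First, I would expand $X^J = \prod_k X_k^{J_k}$ using the Leibniz formula for each determinant $X_k = (-1)^k\det(A_{\hat k})$, writing
\[ X^J = \sum_N c_N \prod_{i,j} a_{i,j}^{N_{i,j}}, \]
where $N$ ranges over $d \times (d+1)$ matrices of nonnegative integers with prescribed row sums $|J|$, and $c_N$ is a signed count of tuples of bijections $\sigma^{(k,s)}\colon \{1,\ldots,d\}\to \{0,\ldots,d\}\setminus \{k\}$ (one for each $k\in \{0,\ldots,d\}$ and $s\in\{1,\ldots,J_k\}$) whose incidence pattern $\#\{(k,s) : \sigma^{(k,s)}(i)=j\}$ is $N_{i,j}$. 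Applying $\partial^I$ multiplies each $N$-term by $\prod_{i,j}N_{i,j}!/(N_{i,j}-I_{i,j})!$, restricted to $N$ with $N_{i,j} \geq I_{i,j}$.

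The key point is that, in characteristic $p$, Lucas' theorem forces surviving $N$ to satisfy $N_{i,j}\bmod p \geq I_{i,j}$; combined with the row-sum constraints $\sum_j N_{i,j} = |J| \equiv -1 \pmod p$ and $\sum_j I_{i,j} = p-1$, I expect signed cancellation among the $c_N$ to restrict the surviving terms further to $N_{i,j} \bmod p = I_{i,j}$ for all $i,j$. For such $N$, writing $N = I + pM$, the Lucas coefficients collapse to $\prod_{i,j}I_{i,j}!$ modulo $p$, and a row-by-row application of Wilson's theorem yields $\prod_j I_{i,j}! \equiv -1 \pmod p$ per row (using $(p-1)! \equiv -1 \pmod p$ together with the multinomial version of Lucas), producing the global sign $(-1)^d$.

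The remaining step is to identify the surviving sum with the monomial $X^{pL}$: after the reduction to $N = I + pM$, the sum over $M$ weighted by the signed combinatorial count $c_N$ should reassemble via Frobenius into the Leibniz expansion of the $p$-th powers $\prod_k X_k^{pL_k}$, with $pL_j = I_j + J_j - (p-1)$. When the column-sum condition $I_j + J_j \equiv -1 \pmod p$ fails, no consistent decomposition $N = I + pM$ exists at the column level, and the sum vanishes. The main obstacle is proving the signed cancellation that restricts the sum to $N \bmod p = I$ and then reassembling the reduced sum into the Frobenius image $X^{pL}$; the signed count $c_N$ has a subtle structure and identifying the surviving family as a $p$-th power requires careful tracking of the Leibniz expansion of the determinants modulo $p$. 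For $p = 2$ this step is simpler because each row contributes only one derivative and all multinomial factors trivialize; for $p \geq 3$ one needs the full force of Lucas' and Wilson's identities, which is the new ingredient referenced in Remark~\ref{rem:compare}.
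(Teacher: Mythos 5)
Your proposal takes a genuinely different route from the paper, but it has a gap that is the entire difficulty of the theorem.

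The paper does not expand $X^J$ via Leibniz at all. Instead it first proves (Lemma~\ref{lem:4.8}, via Lemma~\ref{lem:4.9.1}) that $\partial^I X^J$ is $\SL(d,k)$-invariant, hence lies in $k[X_0,\ldots,X_d]$. A $\Z^{d+1}$-multigrading in which $a_{i,j}$ has degree $e_j$ then forces $\partial^I X^J$ to be a \emph{scalar multiple} $c_I X^M$ of the unique monomial of the correct multidegree, reducing the theorem to computing a single constant. The vanishing case (Lemma~\ref{lem:4.10}) follows because $\partial(\partial^I X^J)=0$ forces $\partial^I X^J$ to be a $p$th power, which cannot equal $c X^M$ if $X^M$ is not a $p$th power. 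The constant $c_I=(-1)^d$ is then computed by an inductive reduction in dimension (Lemmas~\ref{lem:4.11} and~\ref{lem-coeff}), using a specialization map $\pi$ and the elementary evaluation $\partial^{\overline{I}}X_0^{p-1}=((p-1)!)^{d-1}$ together with Wilson's theorem. So the invariant theory converts a global combinatorial identity into the determination of one number.

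Your route — expand each $X_k$ by Leibniz, apply $\partial^I$, analyze surviving exponent matrices $N$ — must confront the signed count $c_N$ head-on, and the step you flag as "the main obstacle" is precisely where it fails as written. Lucas only gives $N_{i,j}\bmod p\ge I_{i,j}$; the row-sum congruences $\sum_j N_{i,j}\equiv -1$ and $\sum_j I_{i,j}=p-1$ do \emph{not} by themselves force $N\equiv I\pmod p$. For example, with $p=3$ and a row $(2,0,0,0)$ of $I$, a row $(2,1,1,1)$ of $N$ satisfies the digit inequality and the congruence but not equality. The elimination of such $N$ therefore requires an actual cancellation identity among the signed determinantal counts $c_N$, which you have not established and which is not a formal consequence of Lucas or Wilson. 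The same goes for the "reassembly via Frobenius" of the surviving sum into $X^{pL}$: you would need to show that the restricted Leibniz sum factors as a perfect $p$th power term by term, which again is the content of the theorem and not a bookkeeping step. In short, your plan identifies where the new ingredients beyond $p=2$ are needed, but it does not supply them; the paper's $\SL(d,k)$-invariance argument is exactly the device that makes the cancellation you hope for unnecessary.
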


\begin{lemma}\label{lem:4.7} 
	Theorem~\ref{thm:4.6} implies Theorem~\ref{thm:4.1} for $\Pi$.
\end{lemma}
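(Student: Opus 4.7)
The plan is to deduce Theorem~\ref{thm:4.1} for $\Pi$ from Theorem~\ref{thm:4.6} by rewriting the degree map so that its denominator becomes a $p$th power, and then shifting the exponent vector by $(p-1,\dots,p-1)$ so that Theorem~\ref{thm:4.6} applies.

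Starting from $\deg_\Pi(x^J) = X^J/(X_0 \cdots X_d)$ given by (\ref{eq-degPi}), I would multiply numerator and denominator by $(X_0 \cdots X_d)^{p-1}$ to obtain
\[
\deg_\Pi(x^J) \;=\; \frac{X^{J'}}{(X_0 \cdots X_d)^p}, \qquad \text{where } J' := J + (p-1, \ldots, p-1).
\]
For any nonzero $Y \in k(a_{i,j})$ and any first-order derivation $\partial = \partial/\partial a_{i,j}$, one has $\partial(Y^{-p}) = -p\,Y^{-p-1}\partial(Y) = 0$ in characteristic $p$. By the Leibniz rule the higher-order operator $\partial^I$ therefore commutes with multiplication by $(X_0\cdots X_d)^{-p}$, and so
\[
\partial^I \deg_\Pi(x^J) \;=\; \frac{\partial^I X^{J'}}{(X_0 \cdots X_d)^p}.
\]

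Next I would verify that $J'$ satisfies the hypotheses of Theorem~\ref{thm:4.6}: its row sum is $d + (p-1)(d+1) = p(d+1) - 1 \equiv -1 \pmod{p}$. Applying Theorem~\ref{thm:4.6} to the pair $(I, J')$, the condition $X^I X^{J'} = X^{pL}(X_0\cdots X_d)^{p-1}$ simplifies to $X^I X^J = X^{pL}$. Since the exponent of $X_j$ in $X^I X^J$ equals $\sum_i I_{i,j} + J_j$, which is also the exponent of $x_j$ in $x^I x^J$, this condition holds precisely when $x^I x^J$ is a $p$th power, and then $L$ agrees with the vector $M$ determined by $x^M = (x^I x^J)^{1/p}$. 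Substituting the conclusion of Theorem~\ref{thm:4.6}, when the $p$th-power condition is met one gets
\[
\partial^I \deg_\Pi(x^J) \;=\; \frac{(-1)^d X^{pM}}{(X_0 \cdots X_d)^p} \;=\; (-1)^d \deg_\Pi(x^M)^p \;=\; (-1)^d \deg_\Pi\bigl((x^I x^J)^{1/p}\bigr)^p,
\]
and when the condition fails both sides vanish, the right side by the convention $(x^I x^J)^{1/p} = 0$.

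The only substantive step is the Leibniz computation passing $\partial^I$ through $(X_0\cdots X_d)^{-p}$ in characteristic $p$; the rest is bookkeeping matching the $x$-monomial language of Theorem~\ref{thm:4.1} with the $X$-monomial language of Theorem~\ref{thm:4.6}, together with the identity $\deg_\Pi(x^M)^p = X^{pM}/(X_0\cdots X_d)^p$ that converts the $p$th power of a degree map back into the denominator form used above.
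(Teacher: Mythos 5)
Your proof is correct and follows essentially the same route as the paper: both multiply numerator and denominator of $\deg_\Pi(x^J)$ by $(X_0\cdots X_d)^{p-1}$ so the denominator becomes a $p$th power (through which $\partial^I$ passes in characteristic $p$), set $J' = J + (p-1,\ldots,p-1)$, check the row-sum condition $\equiv -1 \pmod p$, and invoke Theorem~\ref{thm:4.6}. Your explicit translation between the $x$-monomial and $X$-monomial conditions is a useful bit of bookkeeping that the paper leaves implicit, but the underlying argument is identical.
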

\begin{proof}
The statement of Theorem~\ref{thm:4.1} is
	\[
	\partial^I  \frac{X^J}{X_0 \cdots X_{d}}  = 
	 \begin{cases}
		(-1)^d X^I X^J/(X_0 \cdots X_{d})^p &\textrm{ if } X^I X^J = X^{pL},\\
		0 &\textrm{ otherwise.}
	\end{cases}
	\]
	As partial derivatives commute with $p$th powers, multiplying both sides by $(X_0 \cdots X_{d})^p$ gives the equivalent statement
	\[
	\partial^I  \left( X^{J} (X_0 \cdots X_d)^{p-1} \right) = 
	\begin{cases}
		(-1)^d X^{pL}  &\textrm{ if } X^I X^J = X^{pL},\\
		0 &\textrm{ otherwise.}
	\end{cases}
	\]
	The above equation is the statement of Theorem~\ref{thm:4.6} for $I$ and $J'$ such that $X^{J'} = X^J (X_0 \cdots X_{d})^{p - 1}$. Note that the row sum of $J'$ is $d + (p - 1)(d + 1) = pd + (p - 1)$, which is congruent to $-1  \pmod{p}$. 
	\end{proof}

	\subsection{$\SL(d,k)$-invariance}
	Let $B\in \SL(d,k)$. Multiplication by $B$ from the left $A\mapsto B\cdot A$ gives a linear change of variables on the $a_{i,j}$, so it gives an action of $\SL(d,k)$ on the polynomial ring $k[a_{i,j}]$. If $k$ is infinite, then the invariants of this action are the subring $k[X_0, \dotsc, X_d]$  (\cite[Theorem 13.5.5]{Procesi2007}). 
Theorem~\ref{thm:4.6} depends only on the characteristic of the field $k$, not on the field itself. It suffices to prove it when $k$ is infinite.

	The following lemma generalizes \cite[Lemma~4.8]{KaruXiaoAnisotropy}. 
		
	\begin{lemma}\label{lem:4.8} 
	In the setup of Theorem~\ref{thm:4.6}, $\partial^I X^J \in k[a_{i,j}]$ is $\SL(d,k)$-invariant for any field $k$ of characteristic $p$. 
	\end{lemma}
	\begin{proof}
The group $\SL(d,k)$ is generated by elementary matrices (transvections) that act on $A$ by adding a constant $c$ times a row $r$ to a row $s$. It suffices to prove invariance under this action when $r=2$ and $s=1$.  Consider new variables
\[ a_{i,j}' = \begin{cases} 
a_{i,j} + c a_{2,j} & \text{if $i=1$,}\\
a_{i,j} & \text{otherwise}.
\end{cases} \]
Then the action maps the variable $a_{i,j}$ to $a'_{i,j}$. For a polynomial $f\in k[a_{i,j}]$, we denote by $f(a')$ the result of substituting $a'_{i,j}$ into $a_{i,j}$. We need to prove that 
\[ (\partial^I X^J)(a') = \partial^I X^J.\]
Let $f\in k[a_{i,j}]$. Then for any $0\leq m \leq d$, the chain rule gives
\begin{equation*}\begin{split}
 \frac{\partial}{\partial a_{2,m}} \big( f(a') \big) &= \frac{\partial}{\partial a_{2,m}} f(a_{1,j}+c a_{2,j}, a_{2,j}, \ldots) \\
& =  \left(\frac{\partial}{\partial a_{2,m}}  f \right) (a')  +  \left( c \frac{\partial}{\partial a_{1,m}}   f \right) (a')  =  \left( \left(\frac{\partial}{\partial a_{2,m}} + c \frac{\partial}{\partial a_{1,m}} \right)  f \right) (a') .
\end{split}\end{equation*}
For $i\neq 2$,
\[ \frac{\partial}{\partial a_{i,m}} \big( f(a') \big) = \left( \frac{\partial}{\partial a_{i,m}}  f \right) (a').\]
Now let $f = X^J$. We apply the derivatives in $\partial^I$ to $X^J(a')$ one at a time. The result is 
\[ \big( \partial^I(a') X^J \big) (a'),\]
where  $\partial^I(a')$ is the derivative $\partial^I$ with each  $\frac{\partial}{\partial a_{2,m}}$ replaced by $\frac{\partial}{\partial a_{2,m}} + c \frac{\partial}{\partial a_{1,m}}$, i.e.,
$$\partial^I(a')  \coloneqq \prod_{m = 0}^d \left(\frac{\partial}{\partial a_{2,m}} + c \frac{\partial}{\partial a_{1,m}} \right)^{I_{2,m}} 
\prod_{\substack{ i,m \\ i \not= 2}}
 \left(\frac{\partial}{\partial a_{i,m}}\right)^{I_{i,m}} . $$
The derivative $\partial^I(a')$ has one term equal to $\partial^I$. All other terms contain at least $p$ derivatives of the form $\frac{\partial}{\partial a_{1,m}}$ for some $m$. Lemma~\ref{lem:4.9.1} below shows that all these other terms vanish, so 
\[ \partial^I \big(X^J(a')\big) = (\partial^I X^J) (a').\]
Since $X^J$ is invariant under the $\SL(d,k)$ action, the left hand side is equal to $\partial^I X^J$.
	\end{proof}

	\begin{lemma}\label{lem:4.9.1} 
		If the row sum of $J$ is congruent to $-1  \pmod{p}$, then 
		$$\frac{\partial}{\partial a_{r,j_1}} \frac{\partial}{\partial a_{r,j_2}} \cdots \frac{\partial}{\partial a_{r,j_p}} X^J = 0. $$
 for any $1 \le r \le d$ and $0 \le j_1,\dotsc,j_p \le d$. 
	\end{lemma}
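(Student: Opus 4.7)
The plan is to reduce the lemma to a statement about two-variable polynomials and then invoke a short congruence argument. The crucial structural observation is that each $X_m$, viewed as a linear polynomial in the row-$r$ variables $a_{r,0}, \ldots, a_{r,d}$ (with the other $a_{i,j}$ treated as coefficients), vanishes whenever these variables are specialized to the entries of any other row $(a_{s,0}, \ldots, a_{s,d})$ with $s \neq r$. Indeed, $X_m = (-1)^m \det A_{\hat m}$, and such a specialization produces a $d \times d$ matrix with two identical rows. Thus all $d+1$ of the linear forms $X_0, \ldots, X_d$ lie in the $2$-dimensional subspace $S$ of linear forms vanishing on the $d-1$ rows $\{(a_{s,0}, \ldots, a_{s,d}) : s \neq r\}$, which are generically linearly independent. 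Fixing a basis $\{V, W\}$ of $S$ and writing $X_m = \alpha_m V + \beta_m W$, we obtain $X^J = P(V, W)$ where $P$ is a homogeneous polynomial of degree $N = \sum_m J_m \equiv -1 \pmod{p}$ in two variables.

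Since $V$ and $W$ are themselves linear in the $a_{r,\bullet}$, the chain rule expresses $\partial_{j_1}\cdots \partial_{j_p} X^J$ as a $K$-linear combination of the $p$-th order partial derivatives $\frac{\partial^p P}{\partial V^i \partial W^{p-i}}$. It therefore suffices to show that every such partial vanishes in characteristic $p$, and by linearity on monomials this reduces to the combinatorial claim: for any $n, m \geq 0$ with $n + m \equiv -1 \pmod{p}$ and any $0 \leq i \leq p$, the coefficient
\[
\frac{n!}{(n-i)!} \cdot \frac{m!}{(m-(p-i))!} = \prod_{k=0}^{i-1}(n-k) \cdot \prod_{k=0}^{p-i-1}(m-k)
\]
is divisible by $p$. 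Using $m - k \equiv -(n+k+1) \pmod{p}$, the expression reduces modulo $p$ to $(-1)^{p-i}(n-i+1)(n-i+2) \cdots (n+p-i)$, a product of $p$ consecutive integers, which is automatically divisible by $p$.

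The main obstacle is finding the right structural reduction, namely noticing that every $X_m$ lies in a common $2$-dimensional subspace of linear forms in the row-$r$ variables. Once this is in hand, the remaining identity is a brief modular arithmetic check that works uniformly for all mixed partials of order $p$, including the pure ones $\partial_i^p$.
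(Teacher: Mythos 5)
Your proof is correct, and it takes a genuinely different route from the paper's after the shared initial reduction. Both proofs rest on the same geometric observation — that, as linear forms in the row-$r$ variables, all the $X_m$ lie in a common $2$-dimensional subspace over a field independent of row $r$ — but you obtain it by dimension counting (each $X_m$ vanishes when row $r$ is set equal to another row, giving $d-1$ generically independent conditions on a $(d+1)$-dimensional space), whereas the paper makes it explicit via Pl\"ucker relations among the maximal minors of the augmented matrix $\tilde{A}$. After the reduction, the two arguments diverge more substantially: the paper proceeds by induction on the row sum of $J$, using the base case (degree $p-1$ homogeneity) together with pulling out a $p$-th power $X_1^p$ once the row sum is large enough; you instead write $X^J = P(V,W)$ for a homogeneous binary form $P$ of degree $N \equiv -1 \pmod p$ and directly verify that every $p$-th order mixed partial $\frac{\partial^p P}{\partial V^i \partial W^{p-i}}$ vanishes in characteristic $p$, which on a monomial $V^n W^m$ reduces to the observation that $\prod_{k=0}^{i-1}(n-k)\cdot\prod_{k=0}^{p-i-1}(m-k)$ is, modulo $p$, a signed product of $p$ consecutive integers. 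Your approach eliminates the induction and the explicit Pl\"ucker manipulation at the cost of the initial dimension-count claim, while the paper's is more hands-on but needs to handle the base case and the inductive step separately; both are valid, and the modular-arithmetic step you use is a pleasant uniform statement that handles all mixed partials at once.
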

	\begin{proof}

Let $r$ be fixed. We argue by induction on the row sum of $J$. 

First assume that the row sum of $J$ is $p - 1$. 
Then since each $X_{j}$ is homogeneous of degree $1$ in the variables $\{ a_{r,j} : 0 \le j \le d \}$, $X^J$ is homogeneous of degree $p - 1$. Applying $\frac{\partial}{\partial a_{r,j}}$
to a homogeneous polynomial either kills it or decreases its degree by $1$, so 
$\frac{\partial}{\partial a_{r,j_1}} \frac{\partial}{\partial a_{r,j_2}} \cdots \frac{\partial}{\partial a_{r,j_p}} X^J = 0$. 

 Now assume that the row sum of $J$ is strictly greater than $p - 1$. We use relations between the $X_j$ to decrease the row sum.

Let $\tilde{A}$ be the matrix $A = (a_{i,j})$  augmented with an extra column ${e}_r$ (the standard basis vector). Denote by $Y_{j m}$ the determinant of $\tilde{A}$ with two columns $j$ and $m$ removed.  The determinants $Y_{j m}$ satisfy the classical Pl\"ucker relations: for any $0\leq j_1< j_2 < j_3 < j_4 \leq d+1$, we have
 \[ Y_{j_1 j_2} Y_{j_3 j_4} - Y_{j_1 j_3} Y_{j_2 j_4} +Y_{j_1 j_4} Y_{j_2 j_3} =0.\]
  We specialize  to the case  where $(j_1, j_2, j_3, j_4) = (0,1,j,d+1)$ for some $1 <  j < d+1$. Using that  $X_j = (-1)^j Y_{j,d+1}$, the relation becomes
  \[ (-1)^j Y_{01} X_{j} + Y_{0 j} X_{1}  + X_{0} Y_{1 j} =0.\]
This relation allows us to write
\begin{equation}\label{eq:X_kX_1X_2}
	X_j = b_{1} X_0 + b_{2} X_1,
\end{equation}
where $b_{1} = (-1)^{j+1} Y_{1j}/Y_{01}$ and $b_{2} = (-1)^{j+1}  Y_{0j}/Y_{01}$ are elements of $K$ that are independent of the variables $\{ a_{r,j} : 0 \le j \le d \}$. 
Substituting the expression from \eqref{eq:X_kX_1X_2} into $X^J$ for all $j > 1$, we reduce to the case when $X^J = X_0^{m_1} X_1^{m_2}$ for some nonnegative integers $m_1,m_2$ such that $m_1 + m_2$ is the row sum of $J$. Since the row sum of $J$ is $-1  \pmod{p}$ and is strictly greater than $p - 1$, it must be at least $2p - 1$. 
 Hence either $m_1 \geq p$ or $m_2 \geq p$. Without loss of generality, assume that $m_1 \geq  p$. Then we can write $X^J = X_0^p X^{J'}$ with the row sum of $J'$ congruent to  $-1  \pmod{p}$ and strictly less than the row sum of $J$. By induction, and since derivatives commute with $p$th powers, we have
 \[
 \frac{\partial}{\partial a_{r,j_1}} \frac{\partial}{\partial a_{r,j_2}} \cdots \frac{\partial}{\partial a_{r,j_p}} X^J = X_0^p  \frac{\partial}{\partial a_{r,j_1}} \frac{\partial}{\partial a_{r,j_2}} \cdots \frac{\partial}{\partial a_{r,j_p}} X^{J'} = 0. \qedhere
 \]			
	\end{proof}

\subsection{Proof of Theorem~\ref{thm:4.6}}
We will now prove Theorem~\ref{thm:4.6}, which also finishes the proof of Theorem~\ref{thm:4.1}.

Recall the setup. Let $I = (I_{i,j})$ be a $d \times (d + 1)$ matrix of nonnegative integers such that all row sums are $p - 1$. Let $J$ be a $1 \times (d + 1)$  matrix of nonnegative integers  whose row sum is $-1  \pmod{p}$. 
We write
\[
X^I X^J = X^M (X_0 \cdots X_d)^{p - 1},
\]
for some $1 \times (d+1)$ matrix of integers $M$. Observe that if $(X^M)^{1/p}$ exists, then, since $X^M (X_0 \cdots X_d)^{p - 1}$ has nonnegative exponents, the coefficients of $M$ are nonnegative integers.
Theorem~\ref{thm:4.6} states that $\partial^I X^J$ is equal to $(-1)^d X^M$ if  the $p$th root $(X^M)^{1/p}$ exists, and it is zero otherwise. 

We give $k[a_{i,j}]$ a multigrading by $\Z^{d+1}$, where $a_{i,j}$ has degree $e_j$ (the standard basis vector). Then $X_j$ lies in degree $(1,1,\ldots,1)-e_j$. Since the degrees of the $X_j$ for $j=0,\ldots, d$ are linearly independent, there is at most one monomial in $k[X_0,\ldots, X_d]$ in each degree.

Lemma~\ref{lem:4.8} implies that $\partial^I X^J$ lies in the ring $k[X_0,\ldots, X_d]$. Since $X^J$ is homogeneous with respect to the multigrading and each partial derivative with respect to $a_{i,j}$ reduces the degree by $e_j$, it follows that $\partial^I X^J$ is also homogeneous. Its multidegree is
\begin{gather*} 
\mdeg \partial^I X^J = \mdeg X^J - \sum_{i,j} I_{i,j} e_j = \mdeg X^J + \mdeg X^I - d(p-1)(1,1,\ldots,1) \\
= \mdeg X^I X^J - \mdeg (X_0\cdots X_d)^{p-1} = \mdeg X^M.
\end{gather*}
It follows that $\partial^I X^J$ is a constant multiple of the monomial $X^M$. We need to prove that this constant is equal to $(-1)^d$ if $(X^M)^{1/p}$ exists, and it is zero otherwise.

\begin{lemma}\label{lem:4.10} 
If $(X^M)^{1/p}$ does not exist, then $\partial^I X^J = 0$.
\end{lemma}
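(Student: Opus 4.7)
I would split into two cases based on $M$. If $M \notin \N^{d+1}$ (some $M_j < 0$), the preceding multigrading discussion already gives $\partial^I X^J = 0$, since the only monomial of multidegree $\mdeg X^M$ in $k[X_0,\ldots,X_d]$ would have to be $X^M$ itself, which does not exist. So the real content is the case $M \in \N^{d+1}$ with some $M_{j_0} \not\equiv 0 \pmod p$; write $\partial^I X^J = c X^M$ with $c \in k$, and the task becomes to show $c = 0$.

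The plan is to append one extra derivative and invoke Lemma~\ref{lem:4.9.1}. Pick any $r \in \{1,\ldots,d\}$ and $j' \in \{0,\ldots,d\}$, and let $I'$ be obtained from $I$ by adding $1$ to the $(r,j')$-entry. Then the row-$r$ sum of $I'$ equals $p$, so by Lemma~\ref{lem:4.9.1} the row-$r$ factor $\prod_j (\partial/\partial a_{r,j})^{I'_{r,j}}$ of $\partial^{I'}$ annihilates $X^J$; commuting past the remaining derivatives gives $\partial^{I'} X^J = 0$. On the other hand, $\partial^{I'} = (\partial/\partial a_{r,j'}) \cdot \partial^I$ by commutativity, so $\partial^{I'} X^J = c \cdot \frac{\partial X^M}{\partial a_{r,j'}}$. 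Thus $c \cdot \frac{\partial X^M}{\partial a_{r,j'}} = 0$ in the integral domain $k[a_{i,j}]$ for every pair $(r,j')$, and it suffices to find one pair with $\frac{\partial X^M}{\partial a_{r,j'}} \ne 0$.

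For this I would argue that $X^M$ is not a $p$-th power in $\overline{k}[a_{i,j}]$. Each $X_j$ is (up to sign) the determinant of a generic $d \times d$ minor of $A$, hence is irreducible, and the $X_j$'s are pairwise non-associate since they involve different variables. Thus $\prod_j X_j^{M_j}$ is a prime factorization in the UFD $\overline{k}[a_{i,j}]$, and $X^M$ is a $p$-th power iff $p \mid M_j$ for every $j$, which fails by hypothesis. Because $\overline{k}$ is perfect, the kernel of the total differential on $\overline{k}[a_{i,j}]$ is exactly the subring of $p$-th powers, so some $\partial X^M/\partial a_{r,j'}$ is nonzero, forcing $c = 0$. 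The main obstacle is this last piece---certifying that $X^M$ is not a $p$-th power---and it reduces cleanly to the classical irreducibility of the generic determinant; once that is granted, the rest is a short manipulation.
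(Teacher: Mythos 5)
Your proof is correct and takes essentially the same route as the paper: both use Lemma~\ref{lem:4.9.1} to show that every further partial derivative of $\partial^I X^J$ vanishes, and both conclude from the irreducibility of the $X_j$ (together with the fact that over a perfect field the kernel of all partials is exactly the subring of $p$th powers) that the only way this is compatible with $\partial^I X^J = cX^M$ and $(X^M)^{1/p}$ not existing is $c=0$. The paper phrases this by saying $\partial^I X^J$ is a $p$th power in $k[X_0,\ldots,X_d]$, whereas you argue contrapositively that some $\partial X^M/\partial a_{r,j'}$ is nonzero; you also spell out the case where $M$ has a negative entry, which the paper leaves implicit in the multigrading discussion.
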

\begin{proof}
Lemma~\ref{lem:4.9.1}  gives that the partial derivative of $\partial^I X^J$ with respect to any $a_{i,j}$ is zero. This implies that $\partial^I X^J$ is a $p$th power in $k[a_{i,j}]$ as $\partial^I X^J$ has coefficients in the perfect field $\mathbb{F}_p$. Since the $X_j$ are irreducible, it is also a $p$th power in $k[X_0,\ldots, X_d]$. As $\partial^I X^J$ is a constant multiple of $X^M$, we see that the constant must be $0$ in order for it to be a $p$th power. 
\end{proof}

\begin{lemma}\label{lem:4.11}
	If $X^M = X^{pL}$ for some $L$, then $\partial^I X^J = (-1)^d X^{pL}$. 
\end{lemma}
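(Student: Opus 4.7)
By the multigrading argument established above, $\partial^I X^J = c \cdot X^M$ for some constant $c \in k$, and Lemma~\ref{lem:4.10} handles the case when $(X^M)^{1/p}$ does not exist. It therefore remains to determine $c$ under the hypothesis $X^M = X^{pL}$, and the plan is to pin down $c$ by evaluating at a convenient specialization of $A$.

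Define the ring homomorphism $\phi \colon k[a_{i,j}] \to k[t_1, \dotsc, t_d]$ by $\phi(a_{i,j}) = \delta_{i,j}$ for $i, j \in \{1, \dotsc, d\}$ and $\phi(a_{i,0}) = t_i$. A direct determinant computation yields $\phi(X_0) = 1$ and $\phi(X_j) = -t_j$ for $j \geq 1$, so $\phi(X^M) = (-1)^{M_+}\prod_{j \geq 1} t_j^{M_j}$ is a nonzero monomial in the $t_i$, where $M_+ := \sum_{j \geq 1} M_j$. Thus the target identity $c = (-1)^d$ is equivalent to the equality $\phi(\partial^I X^J) = (-1)^{d + M_+}\prod_{j \geq 1} t_j^{M_j}$ in $k[t_1, \dotsc, t_d]$.

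To evaluate $\phi(\partial^I X^J)$, split $\partial^I = \partial^{I_0}\, \partial^{I_{\geq 1}}$, where $\partial^{I_0} = \prod_i (\partial/\partial a_{i,0})^{I_{i,0}}$ commutes with $\phi$ (becoming $\prod_i (\partial/\partial t_i)^{I_{i,0}}$), while $\partial^{I_{\geq 1}}$ collects the derivatives in columns $j \geq 1$, which must be applied before specialization. I would then induct on $d$ by peeling off the $d$-th row: apply the derivatives $\prod_{j \geq 1} (\partial/\partial a_{d,j})^{I_{d,j}}$ together with the substitutions $a_{d,j} = \delta_{d,j}$ for $j \geq 1$, and use cofactor expansion of each $X_j$ along row $d$ to reduce the problem to the analogous identity on a smaller matrix. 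Each reduction step should contribute a factor congruent to $-1$ modulo $p$, using Wilson's theorem $(p-1)! \equiv -1 \pmod p$ together with $\binom{p-1}{k} \equiv (-1)^k \pmod p$.

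The base case $d = 1$ is a direct verification: with $X_0 = a_{1,1}$ and $X_1 = -a_{1,0}$, the reductions force $I_{1,0} = J_1$ and $I_{1,1} = J_0$ with $J_0 + J_1 = p - 1$ (the case $L = 0$, to which one reduces using $\partial^I(X^J \cdot X^{pK}) = X^{pK} \partial^I X^J$), and the claimed identity becomes $(-1)^{J_1} J_0!\, J_1! \equiv -1 \pmod p$, which follows from Wilson combined with $\binom{p-1}{J_1} \equiv (-1)^{J_1} \pmod p$. The principal obstacle is the inductive step: the derivatives $\prod_{j \geq 1}(\partial/\partial a_{d,j})^{I_{d,j}}$ distribute across the many factors of $X^J$ via Leibniz, generating multinomial coefficients that must be tracked modulo $p$, and the reduction must be compatible with the $p$-th power structure $X^M = X^{pL}$. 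The hope is that after the substitution $a_{d,j} = \delta_{d,j}$, these multinomial contributions collapse to the single factor of $-1$ predicted by Wilson's theorem, allowing the induction to close cleanly.
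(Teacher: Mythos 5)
Your approach is genuinely different from the paper's, and it has a real gap.

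You correctly recover that $\partial^I X^J = c\,X^M$ for a constant $c$, and your base case $d=1$ computation is fine. But the plan to determine $c$ by specializing $a_{i,j}\mapsto\delta_{i,j}$ for $j\geq 1$, $a_{i,0}\mapsto t_i$ runs into a fundamental issue that you identify but do not resolve: the map $\phi$ does not commute with $\partial/\partial a_{i,j}$ for $j\geq 1$, since those variables get sent to constants. You must therefore apply all the derivatives $\partial^{I_{\geq 1}}$ symbolically \emph{before} specializing, and then the Leibniz rule on $X_0^{J_0}\cdots X_d^{J_d}$ produces a large sum of cross-terms with multinomial coefficients that have no obvious reason to collapse after substitution. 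Your sentence ``the hope is that\ldots these multinomial contributions collapse to the single factor of $-1$'' is exactly where the proof is missing. The ``peel off row $d$'' induction is a plausible strategy in outline, but as written the inductive step is an assertion, not an argument.

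For comparison, the paper avoids this difficulty entirely. Rather than reducing $J$ (or $L$) to a normal form via specialization, it observes that $c=c_I$ depends only on $I$ and proves a recursion $c_{\overline{I}_{\hat r}}=-c_I$ (Lemma~\ref{lem-coeff}) relating $d$ and $d-1$. The specialization used there, $\pi$, only sets the \emph{last column} $a_{i,d}$ to constants, and the hypothesis that $I$'s last column is zero guarantees that $\partial^I$ involves none of those variables, so $\pi$ commutes with $\partial^I$ exactly. There is no Leibniz explosion; the only nontrivial computation is $\partial^{I_r}\pi(X_d^{p-1})=-\overline{X}^{\overline{I}_r}$, which is one clean application of the multinomial theorem and $(p-1)!\equiv -1$. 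The recursion is then used not as a dimension induction per se, but as a way to show that \emph{all} $(d-1)$-dimensional matrices $\overline{I}$ share the same constant, reducing to one easy diagonal example. To repair your proposal, you would need to control the Leibniz expansion, and the cleanest known way to do so is essentially to switch to the paper's column-removal specialization together with the observation that $c_I$ is independent of $J$.
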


We have seen that $\partial^I X^J$ is a constant multiple of the monomial $X^M = X^{pL}$.
We consider $I, J, L$ satisfying the equation
\begin{equation}\label{eq-deg} 
X^I X^J = X^{pL} (X_0 \cdots X_d)^{p - 1}.
\end{equation}
For every $I$, there is a unique monomial $X^J$ of smallest degree such that this equation has a solution for some $L$. Any other $X^J$ is the product of this minimal degree monomial with the $p$th power of a monomial.
This implies that if $I, J, L$ satisfy (\ref{eq-deg}), then
\[ \partial^I X^J = c_I X^{pL} \]
for some constant $c_I$ that only depends on $I$. To prove that $c_I = (-1)^d$ as stated in the lemma, we compare the constants $c_I$ for different $I$. 

Recall that $X_j$ is  $(-1)^j$ times the determinant of the matrix $A$ with column $j$ removed. 
We write $\overline{X}_j$, $j=0,\ldots, d-1,$ for similar signed determinants in dimension $d-1$, i.e., $(-1)^j$ times the determinant of the result of removing the $j$th column, the last column, and the $r$th row from $A$. Assume that the matrix $I$ has last column equal to zero, and let $\overline{I}_{\hat{r}}$ be the matrix obtained from $I$ by removing the last column and row $r$ for some $1\leq r \leq d$. Then, by the discussion above with $d$ replaced by $d-1$, there is a unique monomial $\overline{X}^{\tilde{J}}$ of smallest degree so that $\overline{X}^{\overline{I}_{\hat{r}}} \overline{X}^{\tilde{J}} = \overline{X}^{p\overline{L}} (\overline{X}_0 \cdots \overline{X}_{d-1})^{p - 1}$, and we have 
\[ \partial^{\overline{I}_{\hat{r}}} \overline{X}^{\tilde{J}} = c_{\overline{I}_{\hat{r}}} \overline{X}^{p\overline{L}} \]
for some $\overline{L}$ and some constant $c_{\overline{I}_{\hat{r}}}$.

\begin{lemma} \label{lem-coeff}
In the situation above, we have 
\[ c_{\overline{I}_{\hat{r}}} = - c_I.\]
\end{lemma}

Let us prove Lemma~\ref{lem:4.11} using Lemma~\ref{lem-coeff}.

\begin{proof}[Proof of Lemma~\ref{lem:4.11}]
We prove the lemma in dimension $d-1$: we will prove it for an arbitrary matrix $\overline{I}$ of size $(d-1)\times d$.
Create a matrix $I$ of size $d\times(d+1)$ by adding to $\overline{I}$ an extra row with row sum $p - 1$ and a zero column. From the matrix $I$ we can construct the matrix $\overline{I}_{\hat{r}}$ by removing the last column and row $r$. One choice of $r$ takes us back to the original $\overline{I}$. Other choices of $r$ correspond to removing from $\overline{I}$ a row and adding another row. Lemma~\ref{lem-coeff} states that the constants $c_{\overline{I}_{\hat{r}}}$ are the same for all choices of $r$. We can change rows of $\overline{I}$ one by one, without changing the constant $c_{\overline{I}}$, until the matrix $\overline{I}$ is given by 
\[ \overline{I}_{i,j} = \begin{cases} p - 1 & \text{ if $i=j$,}\\ 0 & \text{ otherwise.} \end{cases}\]
Recall that we index columns of $\overline{I}$ by $0,1,\ldots, d-1$, so column $0$ of $\overline{I}$ is zero. For this $\overline{I}$ we may take $X^J = X_0^{p-1}$ and $X^L = 1$. Then
\[ \partial^{\overline{I}} X^J = \big( (p-1)! \big)^{d-1} X^{pL}.\]
As $(p-1)!\equiv -1 \pmod{p}$, we get that $c_{\overline{I}}=(-1)^{d-1}$.
\end{proof}

\begin{proof}[Proof of Lemma~\ref{lem-coeff}]
Let $\overline{J}$ and $\overline{L}$ be the vectors $J$ and $L$ with their last entries removed. Let $I_i$ be the $i$th row of $I$, and let $\overline{I}_i$ be the row $I_i$ with its last entry removed. 
We have $\overline{X}^{\overline{I}_{\hat{r}}} \overline{X}^{\overline{J} + \overline{I}_r} = \overline{X}^{p\overline{L}} (\overline{X}_0 \cdots \overline{X}_{d-1})^{p - 1}$, and $\overline{X}^{\overline{J} + \overline{I}_r}$ has the smallest degree with this property. The vectors $\overline{I}_i$ for $i\neq r$ form the rows of $\overline{I}_{\hat{r}}$.

Since the last column of $I$ is zero, we may assume that the last entry of $J$ is $p-1$ and the last entry of $L$ is zero. By definition, 
\begin{equation}\label{eq-ci} \partial^I  X^J = c_I X^{pL}. \end{equation}
Consider the $k$-algebra map $\pi \colon k[a_{i,j}]_{1 \le i \le d, \, 0 \le j \le d} \to k[a_{i,j}]_{1 \le i \le d, \, 0 \le j \le d - 1}$ defined by
\[
\pi(a_{i,j}) = \begin{cases}
	a_{i,j} &\textrm{ if } j < d, \\
	(-1)^{r + d} &\textrm{ if } (i,j) = (r,d), \\
	0 &\textrm{ otherwise.}
\end{cases}
\]
We claim that if we apply $\pi$ to both sides of equation \eqref{eq-ci}, we get
\begin{equation}\label{eq-cirv2} \partial^{\overline{I}_{\hat{r}}}  \overline{X}^{\overline{J}+ \overline{I}_r} = -c_I \overline{X}^{p\overline{L}}. \end{equation}
Here $\overline{I}_{\hat{r}}, \overline{J}+ \overline{I}_r, \overline{L}$ satisfy equation (\ref{eq-deg}) in dimension $d-1$, hence $c_{\overline{I}_{\hat{r}}} = - c_I$.

It remains to establish \eqref{eq-cirv2}. 
By the cofactor expansion of the determinant, we compute $\pi(X_j) = \overline{X}_j$ for $0 \le j \le d - 1$, so applying $\pi$ to the 
right hand side of \eqref{eq-ci} gives $c_I \overline{X}^{p\overline{L}}$. 

 Since $\partial^I$ does not involve the variables $\{ a_{i,d} : 1 \le i \le d \}$, applying $\partial^I$ commutes with applying $\pi$. Write $\partial^{I_r}$  for the derivative corresponding to row $r$ of $I$.  Applying $\pi$ to the left hand side of \eqref{eq-ci} gives
\[  \partial^I \pi(X^J) = 	\partial^{\overline{I}_{\hat{r}}} \left(\overline{X}^{\overline{J}} \left(\partial^{I_r} \pi(X_d)^{p - 1} \right) \right). \\ 
 \]
  We use that $(p-1)! \equiv -1 \pmod{p}$ and $(-1)^{p-1} \equiv 1 \pmod {p}$ to compute
 \begin{align*}
 	\partial^{I_r} \pi(X_d^{p - 1}) = (-1)^{(p - 1)(d + r + 1)}\partial^{I_r} \left(\sum_{j = 0}^{d - 1} a_{r,j} \overline{X}_j \right)^{p - 1} 
 	= (p - 1)! \overline{X}^{\overline{I}_r} = - \overline{X}^{\overline{I}_r}. 
 \end{align*} 
We conclude that   $\partial^I \pi(X^J) = 	-\partial^{\overline{I}_{\hat{r}}} \overline{X}^{\overline{J} + \overline{I}_r}$, which establishes \eqref{eq-cirv2}. 
\end{proof}

\begin{remark}\label{rem:compare}
The proof of Theorem~\ref{thm:4.1} is similar to the argument in \cite[Section 4]{KaruXiaoAnisotropy}, but there are a few places where new arguments are needed, especially Lemma~\ref{lem:4.9.1} and Lemma~\ref{lem:4.11}. For example, the proof of \cite[Lemma 4.11]{KaruXiaoAnisotropy}, which is the special case of Lemma~\ref{lem:4.11} when $p=2$, uses that $\mathbb{F}_2$ has size $2$ and so all nonzero elements of $\mathbb{F}_2$ are equal to $1$. 
\end{remark}

\subsection{Proof of Theorem~\ref{thm:exact}}
We now deduce Theorem~\ref{thm:exact} from Theorem~\ref{thm:4.1}. For any $r \ge 1$ such that $r!$ is invertible in $k$, consider the differential operator
$$\Psi_r \coloneqq \sum_I \frac{a^I}{I!} \partial^I, $$
where $I$ varies over all $d \times n$ matrices of nonnegative integers such that all row sums are equal to $r$. We prove a more general identity involving $\Psi_r$. 
\begin{theorem}\label{thm:psiidentity}
Let $k$ be a field in which $r!$ is invertible.
   Let $\Delta$ be a  connected  simplicial pseudomanifold of dimension $d - 1$ that is oriented over $k$  and has vertex set $V = \{1, \dotsc, n\}$.
Let $J$ be a $1 \times n$ matrix of nonnegative integers with row sum $d$. Then
$$\Psi_r \deg(x^J) = (-1)^{dr}  \deg(x^J).$$
\end{theorem}

\begin{proof}[Proof of Theorem~\ref{thm:exact}]
Applying $\Psi_{p-1}$ to $\deg(x^J)$, we have
\begin{equation*}\begin{split}
\Psi_{p-1} \deg(x^J) &= \sum_{I} \frac{a^I}{I!} \partial^I \deg(x^J) \\ 
&= (-1)^d \sum_I \deg((x^I \cdot x^J)^{1/p})^p \frac{a^I}{I!},
\end{split}\end{equation*}
where $I$ varies over all $d \times n$ matrices of nonnegative integers such that all row sums are equal to $p-1$, and the second equality is by Theorem~\ref{thm:4.1}. As $\Psi_{p-1} \deg(x^J) = (-1)^{d(p-1)} \deg(x^J)$ by Theorem~\ref{thm:psiidentity} and $(-1)^{d(p-1)} \equiv 1 \pmod p$, the result follows. 
\end{proof}
In order to prove Theorem~\ref{thm:psiidentity}, we introduce some auxiliary differential operators that will allow us to factor $\Psi_r$. For $1 \le i \le d$, define 
$$\Phi_{i,r} \coloneqq \sum_{1 \le j_1, \dotsc, j_r \le n} a_{i,j_1} \dotsb a_{i,j_r} \frac{\partial}{\partial a_{i,j_1}} \dotsb \frac{\partial}{\partial{a_{i,j_r}}}.$$
Note that if $i \not= i'$, then $\Phi_{i,r}$ and $\Phi_{i', r}$ commute, as they involve distinct variables. We have that
\begin{equation}\label{eq:psifactor}
(r!)^d \Psi_r = \prod_{i=1}^{d} \Phi_{i,r}.	
\end{equation}
For $1 \le i \le d$, define
$\phi_i \coloneqq \sum_{j = 1}^n a_{i,j} \frac{\partial}{\partial {a_{i,j}}}.$  Then we have the following identity.
\begin{lemma}\label{lem:phifactor}
For any $1 \le i \le d$ and any $r \ge 1$, we have
$$\Phi_{i, r} = \phi_i (\phi_i - 1) \dotsb (\phi_i - (r-1)).$$
\end{lemma}

\begin{proof}
To simplify notation, let $\partial_j= a_{i,j} \frac{\partial}{\partial a_{i,j}}$. Note that the $\partial_j$ commute with each other. The one-variable case, i.e., that
\[ a_{i,j}^\alpha \left (\frac{\partial}{\partial a_{i,j}}\right)^\alpha = \partial_j(\partial_j-1)\cdots(\partial_j-(\alpha-1))\]
for an integer $\alpha\geq 0$, can be checked by applying both sides to monomials $a_{i,j}^\beta$. 

We will further use the notation 
\[ [x]_\alpha = x(x-1) \cdots (x-(\alpha-1)),\quad  {x \choose \alpha} = \frac{[x]_\alpha}{\alpha!},\]
where $\alpha\geq 0$ is an integer and $x$ is a variable.

The sum for $\Phi_{i,r}$ can be expressed as
\[ \Phi_{i,r} = \sum_{\alpha_1+\cdots + \alpha_n=r} {r \choose \alpha_1, \ldots, \alpha_n} a_{i,1}^{\alpha_1} \left (\frac{\partial}{\partial a_{i,1}}\right )^{\alpha_1} \cdots a_{i,n}^{\alpha_n} \left (\frac{\partial}{\partial a_{i,n}}\right )^{\alpha_n}.\]
Here the integers $\alpha_j$ are nonnegative, and the multinomial appears in the sum because we need to choose $\alpha_1$ indices $j$ equal to $1$, $\alpha_2$ indices $j$ equal to $2$, and so on. Applying the one variable formula above, this simplifies to 
\begin{align*} 
\Phi_{i,r} &= \sum_{\alpha_1+\cdots + \alpha_n=r} {r \choose \alpha_1, \ldots, \alpha_n} [\partial_1]_{\alpha_1} \cdots [\partial_n]_{\alpha_n} \\ 
&= \sum_{\alpha_1+\cdots + \alpha_n=r} r! {\partial_1 \choose \alpha_1} \cdots {\partial_n \choose \alpha_n}.
\end{align*}
The claim is that this sum is equal to 
\[ [\sum_j \partial_j]_r,\]
or after dividing by $r!$,
\[ {\sum_j \partial_j \choose r} =  \sum_{\alpha_1+\cdots + \alpha_n=r} {\partial_1 \choose \alpha_1} \cdots {\partial_n \choose \alpha_n}. \]
This last equality clearly holds if we substitute any nonnegative integers into $\partial_j$. 

Because the $\partial_j$ commute, both sides of the equality in the statement of the lemma can be expressed as polynomials in $\partial_j$ with integer coefficients. Since these polynomials take the same value when evaluated at the Zariski dense set of all nonnegative integers, they must be equal. Hence they also reduce to equal polynomials over any field.
\end{proof}
\begin{proof}[Proof of Theorem~\ref{thm:psiidentity}]
As in the proof of Theorem~\ref{thm:4.1}, we reduce to the case of the boundary of a simplex. We can write the degree map on $\Delta$ as a sum of the degree maps on boundaries of $d$-simplices $\Pi_i$, see \eqref{eq-conn-sum}. Let $\tilde{\Psi}_r$ be the differential operator on $K[x_0, \dotsc, x_n]$ given by $\sum_I \frac{a^I}{I!} \partial^I$, where the sum is over all $d \times (n+1)$ matrices $I$ with all row sums equal to $r$, i.e., $\tilde{\Psi}_r$ is given by the same formula as $\Psi_r$ except we include $\frac{\partial}{\partial {a_{i,0}}}$ as well. For each $i$, $\tilde{\Psi}_r \deg_{\Pi_i}(x^J)$ is equal to the result of applying the differential operator which is defined only using the vertices of $\Pi_i$. This holds for $\Delta$ as well. We see that
$$\Psi_r \deg_{\Delta}(x^J) = \tilde{\Psi}_r \deg_{\Delta}(x^J) = \sum_{i=1}^{M} \tilde{\Psi}_r \deg_{\Pi_i}(x^J).$$
If we know that $\tilde{\Psi}_r \deg_{\Pi_i}(x^J) = (-1)^{dr} \deg_{\Pi_i}(x^J)$, then we deduce $\Psi_r \deg_{\Delta}(x^J) = (-1)^{dr}  \deg_{\Delta}(x^J)$ as well. 
We can therefore reduce to showing that $\Psi_r \deg(x^J) = (-1)^{dr}  \deg(x^J)$ when $\Delta = \Pi$ is the boundary of a simplex with vertex set $\{0, 1, \dotsc, d\}$.
We claim that, for each $1 \le i \le d$, we have
$$\phi_i \deg(x^J) = - \deg(x^J).$$
Indeed, cofactor expansion along the $i$th row implies that $\phi_i X_j = X_j$ for any $j$. The product rule then implies that $\phi_i$ acts on any Laurent monomial of degree $m$ in the $X_j$ by multiplication by $m$. 
By \eqref{eq-degPi}, $\deg(x^J)$ is a Laurent monomial of degree $-1$ in the $X_j$. By Lemma~\ref{lem:phifactor}, we have
\begin{equation*}\begin{split}
\Phi_{i,r} \deg(x^J) &= \phi_i(\phi_i - 1) \dotsb (\phi_i - (r - 1)) \deg(x^J) \\  
&= (-1)(-2) \dotsb (-r) \deg(x^J) \\  
&= (-1)^{r} r! \deg(x^J).
\end{split}\end{equation*}
Applying \eqref{eq:psifactor} then gives the result.  
\end{proof}

\section{Applications}\label{sec:applications}

\subsection{Applications in characteristic $p$}

Fix a prime $p$, and assume that $k$ is a field of characteristic $p$. Let $\Delta$ be a connected simplicial pseudomanifold of dimension $d-1$ that is oriented over $k$  with vertex set $V = \{1, \dotsc, n\}$.
We apply Theorem~\ref{thm:4.1} to deduce results about the Gorenstein quotient $\overline{H}(\Delta)$. Firstly, we have the following generalization of \cite[Corollary~4.2]{KaruXiaoAnisotropy}. 

 \begin{corollary}\label{cor:4.2} 
 	Let $h \in \overline{H}^m(\Delta)$ for some $m \le d/p$. 
 Let $J$ be a $1 \times n$  matrix of nonnegative integers  such that the row sum is $d - pm$. 
  Then
 $$\deg(h^p \cdot x^J) = (-1)^d \sum_I \deg(h \cdot  (x^I \cdot x^J)^{1/p})^p \frac{a^I}{I!},$$
 where $I$ varies over all $d \times n$ matrices of nonnegative integers whose row sums are all $p - 1$. 
 Moreover, if $I$ is  a $d \times n$ matrix of nonnegative integers such that all row sums are $p - 1$, then 
  \[
 \partial^I \deg(h^p \cdot x^J) = (-1)^d \deg( h \cdot (x^I \cdot x^J)^{1/p} )^p. 
 \]
 
 \end{corollary}
 \begin{proof}
 	The second identity follows by applying  $\partial^I$ to the first identity, and using the fact that partial derivatives commute with $p$th powers in characteristic $p$. 

 	We prove the first identity. 	Write $h = \sum_L \lambda_L x^L$ for some $1 \times n$ matrices of nonnegative integers $L$ and some $\lambda_L \in K$. Using Theorem~\ref{thm:exact}, we compute
 	\begin{align*}
 		\deg(h^p \cdot x^J) &= \sum_L  \lambda_L^p  \deg(x^{pL} \cdot x^J) \\
 		&= (-1)^d \sum_L  \lambda_L^p \sum_I \deg(x^L \cdot  (x^I \cdot x^J)^{1/p})^p \frac{a^I}{I!} \\
 		&= (-1)^d \sum_I \deg(h \cdot  (x^I \cdot x^J)^{1/p})^p \frac{a^I}{I!}. \qedhere
 	\end{align*}
 	 \end{proof}

Secondly, we have the following generalization of \cite[Corollary~4.3]{KaruXiaoAnisotropy}. 

\begin{corollary}\label{cor:4.3}
Suppose $g \in \overline{H}^m(\Delta)$ has $\ell^r \cdot g^p = 0$ in $\overline{H}^{r + pm}(\Delta)$ for some $0 \le r \le d - pm$. 
Let $t = \lfloor r/p \rfloor$. Then $\ell^t \cdot  g = 0$ in $\overline{H}^{t + m}(\Delta)$.
\end{corollary}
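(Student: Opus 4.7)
The plan is to reduce Corollary~\ref{cor:4.3} to a direct application of Theorem~\ref{thm:4.4}, exploiting the characteristic $p$ identity $(\ell^t \cdot g)^p = \ell^{pt} \cdot g^p$. The key observation is that because $t = \lfloor r/p \rfloor$ satisfies $pt \le r$, the hypothesis $\ell^r \cdot g^p = 0$ can be repackaged as a vanishing statement about a power of the single element $g' \coloneqq \ell^t \cdot g \in \overline{H}^{t+m}(\Delta)$.

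First, I would check that $g'$ lies in the degree range to which Theorem~\ref{thm:4.4} applies, i.e., that $t + m \le d/p$. From $pt \le r$ combined with the hypothesis $r \le d - pm$, we obtain $pt + pm \le d$, so this bound is automatic.

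Second, I would compute the quantity that Theorem~\ref{thm:4.4} requires to be nonzero when $g' \ne 0$:
\[
\ell^{d - p(t+m)} \cdot (g')^p \;=\; \ell^{d - pt - pm} \cdot \ell^{pt} \cdot g^p \;=\; \ell^{d - pm} \cdot g^p \;=\; \ell^{d - pm - r} \cdot (\ell^r \cdot g^p) \;=\; 0,
\]
where the first equality uses the Frobenius identity in characteristic $p$, and the last uses the hypothesis together with the non-negativity of $d - pm - r$.

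The contrapositive of Theorem~\ref{thm:4.4} then forces $g' = \ell^t \cdot g = 0$, which is the desired conclusion. I do not anticipate any real obstacle here: once the right element $g'$ is singled out and the degree bookkeeping is checked, the corollary follows formally from Theorem~\ref{thm:4.4}. The only conceptual point worth flagging is that $\lfloor r/p \rfloor$ is precisely the \emph{largest} integer $t$ with $pt \le r$, so this strategy yields the strongest possible conclusion of this shape.
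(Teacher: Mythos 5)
Your algebraic manipulation is correct as far as it goes: setting $g' = \ell^t g$ with $t = \lfloor r/p\rfloor$, verifying $t+m \le d/p$, and using the Frobenius identity to compute $\ell^{d - p(t+m)} (g')^p = \ell^{d-pm-r}\cdot(\ell^r g^p) = 0$, you correctly deduce from Theorem~\ref{thm:4.4} that $g' = 0$. The difficulty is not with the computation but with the direction of the logic: in this paper, Theorem~\ref{thm:4.4} is \emph{proved from} Corollary~\ref{cor:4.3} (by induction on $r$, with the base case $r=0$ and the inductive step each invoking Corollary~\ref{cor:4.3}). Using Theorem~\ref{thm:4.4} to establish Corollary~\ref{cor:4.3} is therefore circular. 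What your argument actually shows is that the two statements are equivalent, but one of them still needs a proof from scratch, and the corollary is the one that admits one.

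The paper's proof of Corollary~\ref{cor:4.3} goes through Corollary~\ref{cor:4.2}, which is the form of the differential-operator identity (Theorem~\ref{thm:4.1}) most directly usable here. Writing $r = tp + s$ with $0 \le s < p$, one applies $\partial^I$ to $\deg\bigl((\ell^t g)^p \cdot x^J \cdot \ell^s\bigr) = 0$, expands $\ell^s$ into a sum over length-$s$ sequences of vertices, and uses Corollary~\ref{cor:4.2} to turn each term into a $p$th power of a degree. The key technical step is then to pick $I$, $J$, and a small monomial $x^M$ dividing $x^{pL}$ so that $x^I x^J x_{v_1}\cdots x_{v_s}$ is a $p$th power \emph{only} when $x_{v_1}\cdots x_{v_s} = x^M$; the multiplicity with which that term appears is a multinomial coefficient $s!/(m_1!\cdots m_n!)$, which is a unit since $s < p$. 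This forces $\deg(\ell^t g \cdot x^L) = 0$ for all $L$, hence $\ell^t g = 0$ by nondegeneracy of the pairing. If you want a blind proof of Corollary~\ref{cor:4.3}, you need to engage with Corollary~\ref{cor:4.2} (or Theorem~\ref{thm:4.1} directly) rather than with Theorem~\ref{thm:4.4}.
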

\begin{proof}
Write $r = tp + s$, where $t = \lfloor r/p \rfloor$  and $0 \leq s < p$. 
For any  $d \times n$ matrix $I$ of nonnegative integers with all row sums equal to $p - 1$ and 
any $1 \times n$ matrix $J$ of nonnegative integers with row sum $d - pm - r$, we have
\[ 
0 = \deg(\ell^r \cdot g^p \cdot  x^J) = \deg( (\ell^t \cdot g)^p \cdot x^J \cdot  \ell^s )
= \sum_{(v_1,\ldots,v_s)} \deg( (\ell^t \cdot g)^p \cdot x^J  \cdot x_{v_1} \cdots  x_{v_s}),
\]
where $(v_1,\ldots,v_s)$ varies over all sequences of $s$ vertices in $V$. 
By Corollary~\ref{cor:4.2}, we have
\begin{equation}\label{eq:g^p}
 0 = \sum_{(v_1,\ldots,v_s)} \partial^I \deg( (\ell^t \cdot g)^p \cdot x^J \cdot x_{v_1} \cdots  x_{v_s}) 
= \sum_{(v_1,\ldots,v_s)} \deg( \ell^t \cdot g \cdot ( x^I \cdot x^J \cdot  x_{v_1} \cdots  x_{v_s} )^{1/p} )^p.
\end{equation}

Suppose that $\ell^t \cdot g$ is nonzero.
By the nondegeneracy of the pairing between $\overline{H}^{m + t}(\Delta)$ and  $\overline{H}^{d - m - t}(\Delta)$, there is a $1 \times n$ matrix $L$ of nonnegative integers with row sum $d - m - t$ such that $\deg(\ell^t \cdot g \cdot x^L)$ is nonzero. 
Let $L = (\nu_1,\ldots, \nu_n)$, so $x^L = x_1^{\nu_1} \cdots  x_n^{\nu_n}$. Let $M = (m_1,\ldots, m_n)$ be a sequence of nonnegative integers that sums to $s$ with the property that $x^M$ divides $x^{pL}$. For example, if $\nu_i$ is nonzero, then one can take $M = (0, \ldots,0,s,0,\ldots ,0)$, where the $s$ is in the $i$th coordinate. 
Choose $I$ and $J$ so that
\begin{equation}\label{eq:LMIJ}
x^{pL}/x^M = x^I \cdot x^J 
\end{equation} 
Note that both sides have degree $p(d - m) - r$, so this is always possible. 

Now,  $x^I \cdot x^J \cdot   x_{v_1} \cdots  x_{v_s} = (x^{pL}/x^M) \cdot x_{v_1} \cdots  x_{v_s}$ is a $p$th power if and only if $x^{(p - 1)M} \cdot x_{v_1} \cdots  x_{v_s}$ is a $p$th power. The latter holds if and only if $x_{v_1} \cdots  x_{v_s} = x^M$. Indeed, if $x_{v_1} \cdots  x_{v_s} = x_1^{b_1} \cdots  x_n^{b_n}$ for some nonnegative integers satisfying $b_1 + \cdots + b_n = s$, then 
$x^{(p - 1)M} \cdot x_{v_1} \cdots  x_{v_s}$ is a $p$th power if and only if $b_i \equiv m_i \pmod p$ for all $i$. Since $b_i \leq s < p$, this holds if and only if $b_i = m_i$ for all $i$. 
 
The number of sequences $(v_1,\ldots,v_s)$ 
such that $x_{v_1} \cdots  x_{v_s}$ is equal to $x^M$ is the multinomial coefficient $s!/((m_1)! \cdots  (m_n)!)$, which is nonzero in $k$. 
Therefore \eqref{eq:g^p} implies that
\[
 0 = \frac{s!}{(m_1)!\cdots  (m_n)!} \deg( \ell^t \cdot g \cdot ( x^I\cdot  x^J \cdot  x^M)^{1/p} )^p = \frac{s!}{(m_1)!\cdots  (m_n)!} \deg(\ell^t  \cdot g \cdot x^L)^p.
 \]
This contradicts the assumption that $\deg(\ell^t \cdot g \cdot x^L)$ is nonzero.
\end{proof}

 \begin{proof}[Proof of Theorem~\ref{thm:4.4}]
 	Suppose that $g \in \overline{H}^m(\Delta)$ is nonzero. We claim that $\ell^r \cdot g^p  \in \overline{H}^{r + pm}(\Delta)$ is nonzero for any $r \le d - pm$. The case when $r = 0$ follows from Corollary~\ref{cor:4.3}. Assume that $r > 0$ and let $t = \lfloor r/p \rfloor < r$. By induction, $\ell^t \cdot  g^p = (\ell^t \cdot g) \cdot g^{p - 1}$ is nonzero, so Corollary~\ref{cor:4.3} implies that $\ell^r \cdot g^p$ is nonzero.
 \end{proof}

\subsection{Applications in characteristic $0$}

We now give applications of our results to simplicial $\Z$-homology  manifolds in characteristic $0$. Our strategy is similar to \cite[Section 5]{KaruXiaoAnisotropy}. 
The following application of results of Novik and Swartz allows us to transfer information from characteristic $p$ to characteristic $0$.   

\begin{proposition}\cite[Theorem 1.3 and 1.4]{NovikSwartzGorensteinRings} \label{prop:NS}
Let $k$ be  a field.
   Let $\Delta$ be a  connected  simplicial $k$-homology manifold of dimension $d - 1$ that is oriented over  $k$   and has vertex set $V = \{1, \dotsc, n\}$.
    Let $K = k(a_{i,j})_{1 \le i \le d, \, 1 \le j \le n}$. 
Let $\mu_1, \dotsc, \mu_d$ be an l.s.o.p. for $K[\Delta]$. Then the dimension of the Gorenstein quotient of $K[\Delta]/(\mu_1, \dotsc, \mu_d)$ depends only on the number of faces of each dimension of $\Delta$ and the Betti numbers of $\Delta$ over $k$.
\end{proposition}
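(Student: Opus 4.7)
This proposition is stated without proof, being cited verbatim from \cite{NovikSwartzGorensteinRings}; below I sketch the strategy one would follow to reconstruct the argument.

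The plan has two main steps. First, I would compute the Hilbert function of the full artinian reduction $A := K[\Delta]/(\mu_1, \ldots, \mu_d)$. Because $\Delta$ is a $k$-homology manifold, $k[\Delta]$ is Buchsbaum, so Schenzel's theorem applies and gives
\[
\dim_K A_i \;=\; h_i(\Delta) \;+\; \binom{d}{i}\sum_{j=0}^{i-1}(-1)^{i-1-j}\,\tilde{\beta}_{j-1}(\Delta;k),
\]
the so-called $h'$-vector of $\Delta$. Crucially, this formula is independent of the l.s.o.p.\ and depends only on the $h$-vector of $\Delta$ and the reduced Betti numbers of $\Delta$ over $k$ (which, by flat base change, coincide with those over $K$).

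Second, I would compute the dimension of the kernel $N$ of the Poincar\'{e} pairing $A^\bullet \times A^{d-\bullet} \to K$, so that $\overline{H}(\Delta) = A/N$. Here the key claim, which is the content of Novik--Swartz's analysis, is that
\[
\dim_K N_i \;=\; \binom{d}{i}\,\tilde{\beta}_{i-1}(\Delta;k) \qquad (0 < i < d),
\]
with $N_0 = N_d = 0$. The argument identifies $N$ with the image in $A$ of a natural map built from the local cohomology modules $H^{\bullet}_{\mathfrak{m}}(k[\Delta])$: the $k$-homology manifold hypothesis forces these modules to be concentrated in internal degree zero and to have dimensions controlled by the reduced simplicial cohomology (via Hochster's formula), while the binomial coefficient $\binom{d}{i}$ records how socle contributions in one degree propagate under multiplication by generic linear forms from the l.s.o.p.

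Subtracting the two formulas yields
\[
\dim_K \overline{H}^i(\Delta) \;=\; h'_i(\Delta) - \binom{d}{i}\,\tilde{\beta}_{i-1}(\Delta;k) \;=:\; h''_i(\Delta),
\]
which depends only on the $h$-vector and the Betti numbers. The main obstacle is the second step: one must use the $k$-homology manifold hypothesis (not merely Buchsbaum-ness) to pin down $N$ precisely as the submodule predicted by the Betti numbers, rather than allowing it to be larger or smaller because of irregular local cohomology contributions or non-genericity of the l.s.o.p. Once this structural identification is in place, the Hilbert function of the Gorenstein quotient is purely combinatorial-topological in nature.
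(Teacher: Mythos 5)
The paper offers no proof of this proposition; it is cited directly from Novik and Swartz (their Theorems 1.3 and 1.4), so there is no in-text argument to compare your sketch against. That said, your reconstruction is faithful to the source. Step one (Schenzel's formula, applicable since $k[\Delta]$ is Buchsbaum for a homology manifold) gives the $h'$-vector as the Hilbert function of $A = K[\Delta]/(\mu_1,\ldots,\mu_d)$, and step two invokes the Novik--Swartz socle submodule $I_{NS} \subseteq A$ with $\dim_K (I_{NS})_i = \binom{d}{i}\tilde\beta_{i-1}(\Delta;k)$ for $0<i<d$, together with their theorem that $A/I_{NS}$ is Gorenstein when $\Delta$ is a connected orientable $k$-homology manifold. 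Since $I_{NS}$ lies in the socle of $A$ in degrees strictly below $d$, it annihilates $A_{>0}$ and hence is contained in the kernel of the degree pairing; Gorensteinness of $A/I_{NS}$ makes the induced pairing on the quotient perfect, forcing the kernel to equal $I_{NS}$, which is exactly your identification $N = I_{NS}$. The one slip is your unqualified closing formula $\dim_K\overline{H}^i(\Delta) = h'_i - \binom{d}{i}\tilde\beta_{i-1}(\Delta;k)$: it fails at $i=d$ (for a sphere it would give $0$ instead of $1$), so it needs the restriction $0\le i\le d-1$, with $\dim_K\overline{H}^d(\Delta) = h'_d$. This is consistent with your earlier, correct claim that $N_d=0$, so only the final display requires the caveat.
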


If $\Delta$ is a simplicial 
complex whose integral homology has no $p$-torsion, then the universal coefficient theorem implies that its Betti numbers are the same in characteristic $0$ and characteristic $p$. 
An orientation of $\Delta$ in characteristic $0$ also gives an orientation of $\Delta$ in characteristic $p$. Proposition~\ref{prop:NS} then gives the following corollary. 

\begin{corollary}\label{cor:dimensioninvariant}
Let $\Delta$ be a connected oriented simplicial $\Z$-homology manifold whose homology has no $p$-torsion. Then the dimensions of the graded pieces of the Gorenstein quotient of a generic artinian reduction of $K[\Delta]$ are the same when $k$ has characteristic $0$ or characteristic $p$. 
\end{corollary}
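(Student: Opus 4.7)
My plan is to directly invoke Proposition~\ref{prop:NS} in both characteristic $0$ and characteristic $p$, and to check that the two pieces of data on which the conclusion of that proposition depends --- the $h$-vector of $\Delta$ and the $k$-Betti numbers of $\Delta$ --- agree in both settings. The $h$-vector is purely combinatorial and therefore automatically independent of the field. So the real content is verifying two things: that $\Delta$ qualifies as a $k$-homology manifold for both choices of characteristic, and that the $k$-Betti numbers of $\Delta$ coincide in the two characteristics.

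For the first point, I would use that $\Delta$ is by hypothesis a $\Z$-homology manifold, so the link of every nonempty face $G$ has integral homology isomorphic to that of the $(d - |G| - 1)$-sphere, and in particular torsion-free. The universal coefficient theorem then gives $H_*(\lk(G); k) \cong H_*(\lk(G); \Z) \otimes_\Z k$ for any field $k$, so each link still has the $k$-homology of a sphere of the expected dimension regardless of the characteristic of $k$. The combinatorial orientation of $\Delta$ (a coherent choice of orderings on facets modulo even permutation) is characteristic-free, so the oriented pseudomanifold hypothesis carries over as well.

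For the second point, the assumption that $H_*(\Delta; \Z)$ has no $p$-torsion together with another application of the universal coefficient theorem yields
\[
\dim_k H_i(\Delta; k) = \rk_\Z H_i(\Delta; \Z)
\]
for every $i$ and every field $k$ of characteristic $0$ or $p$, since the relevant $\Tor$ term vanishes in both cases. Thus the $k$-Betti numbers of $\Delta$ agree in the two characteristics. Applying Proposition~\ref{prop:NS} in each characteristic and comparing the outputs then yields the stated equality of graded dimensions of the Gorenstein quotient. I do not anticipate any substantive obstacle here; the argument is a direct assembly of Proposition~\ref{prop:NS} with universal coefficients, and the only subtle point is remembering that the hypothesis concerns the homology of $\Delta$ itself, while the torsion-freeness needed for the links is already guaranteed by the $\Z$-homology manifold structure.
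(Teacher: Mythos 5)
Your proposal is correct and follows essentially the same approach as the paper: apply Proposition~\ref{prop:NS} in both characteristics, using the universal coefficient theorem and the no-$p$-torsion hypothesis to match Betti numbers and noting that the $h$-vector and the orientation are characteristic-free. You spell out a bit more explicitly than the paper why $\Delta$ remains a $k$-homology manifold in either characteristic (the links have torsion-free $\Z$-sphere homology), but this is the same argument.
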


\begin{proposition}\label{prop:pansitropychar0}
Let $\Delta$ be a connected oriented simplicial $\Z$-homology manifold of dimension $d-1$ whose homology has no $p$-torsion. Let $k$ be a field of characteristic $0$. For any nonzero $g \in \overline{H}^m(\Delta)$ with $m \le d/p$, $\deg(\ell^{d - pm} \cdot g^p)$ is nonzero. 
\end{proposition}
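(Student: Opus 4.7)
The plan is to deduce the characteristic $0$ statement from Theorem~\ref{thm:4.4} by reducing modulo $p$, using Corollary~\ref{cor:dimensioninvariant} to control the Gorenstein quotient under specialization. We may assume $k = \Q$.

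First, we represent $g$ by a polynomial $\tilde g = \sum_L \lambda_L x^L \in \Q(a_{i,j})[x_1,\ldots,x_n]_m$. Multiplying $g$ by a suitable nonzero element of $\Q(a_{i,j})$ changes neither the nonvanishing of $g$ in $\overline{H}^m(\Delta)$ nor the nonvanishing of $\deg(\ell^{d-pm}\cdot g^p)$, so we may arrange that each $\lambda_L$ lies in $\Z[a_{i,j}]$ and that the $\lambda_L$ have no common factor divisible by $p$. Let $\bar g = \sum_L \bar\lambda_L x^L$ denote the mod-$p$ reduction of $\tilde g$, a nonzero element of $\F_p[a_{i,j}][x_1,\ldots,x_n]_m$.

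The key step, and the main obstacle, is to show that $\bar g$ defines a nonzero class in $\overline{H}^m(\Delta)_{\F_p}$. This is where Corollary~\ref{cor:dimensioninvariant} enters. Working over $T = \Z_{(p)}[a_{i,j}][[F]^{-1} : F \text{ a facet}]$, one forms an integral version $\overline{H}^{T,\bullet}(\Delta)$ of the Gorenstein quotient, whose base changes to $\Q(a_{i,j})$ and $\F_p(a_{i,j})$ recover the Gorenstein quotients in characteristics $0$ and $p$. Corollary~\ref{cor:dimensioninvariant} asserts that the graded dimensions of these two base changes agree, and a commutative-algebra argument---essentially, $\Z_{(p)}$-flatness of $\overline{H}^{T,m}(\Delta)$ extracted from equality of generic and special fibre dimensions---then forces any class of $\overline{H}^{T,m}(\Delta)$ whose reduction mod $p$ vanishes to be divisible by $p$ in the integral version. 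Our chosen lift of $g$ is not divisible by $p$, hence $\bar g$ is nonzero in $\overline{H}^m(\Delta)_{\F_p}$.

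Granting the claim, Theorem~\ref{thm:4.4} applied to $\bar g$ in characteristic $p$ yields $\deg_{\F_p}(\ell^{d-pm}\cdot \bar g^p) \neq 0$. By Remark~\ref{rem:specialize}, $\deg_\Q(\ell^{d-pm}\cdot \tilde g^p) \in \Q(a_{i,j})$ has denominator a product of facet determinants $[F]$ (none divisible by $p$) and reduces mod $p$ to $\deg_{\F_p}(\ell^{d-pm}\cdot \bar g^p)$; nonvanishing of the latter then forces nonvanishing of the former, and hence nonvanishing of $\deg_\Q(\ell^{d-pm}\cdot g^p)$ in $\Q(a_{i,j})$.
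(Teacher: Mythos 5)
Your plan follows the right overall strategy (reduce mod $p$, invoke Theorem~\ref{thm:4.4}, lift back using Remark~\ref{rem:specialize}), but it has two genuine gaps, both of which the paper's argument is specifically designed to avoid.

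First, ``we may assume $k = \Q$'' is not justified. Anisotropy of a form over $\Q(a_{i,j})$ does not imply anisotropy over $k(a_{i,j})$ for an arbitrary extension $k$ of $\Q$ --- compare $x^2 + y^2$, anisotropic over $\Q$ but isotropic over $\Q(i)$. The paper handles arbitrary $k$ of characteristic $0$ by invoking the Chevalley extension theorem to extend the $p$-adic valuation from $\Q$ to $k$, obtaining a residue field $\kappa$ of characteristic $p$ to reduce to. Without this (or some equivalent device) you have only proved a weaker statement.

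Second, and more centrally, the step ``our chosen lift $\tilde g$ is not divisible by $p$ as a polynomial, hence $\bar g$ is nonzero in $\overline{H}^m(\Delta)_{\F_p}$'' does not follow. Flatness of $\overline{H}^{T,m}(\Delta)$ over $\Z_{(p)}$ tells you that a class whose reduction vanishes is $p$-divisible \emph{in} $\overline{H}^{T,m}(\Delta)$; it does not tell you that the \emph{polynomial} $\tilde g$ is $p$-divisible. Your normalization controls divisibility of $\tilde g$ in the polynomial ring, but the projection to the Gorenstein quotient can turn a $p$-free polynomial into a $p$-divisible class (e.g.\ $\tilde g$ could equal $p\tilde h$ plus a non-$p$-divisible element of the kernel). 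The paper sidesteps this by choosing a monomial basis $v_1,\dotsc,v_r$ of $\overline{H}^m_\kappa(\Delta)$, showing via invertibility of the degree-pairing matrix (and Corollary~\ref{cor:dimensioninvariant}) that the same monomials form a basis of $\overline{H}^m(\Delta)$ in characteristic $0$, and then normalizing the coefficients of $g$ \emph{in that basis} so that at least one survives reduction. That guarantees the reduction is nonzero. To salvage your route you would need to prove the unstated fact that your integral model $\overline{H}^{T,m}(\Delta)$ admits a $T$-basis of monomials in which $\tilde g$'s coefficients can be taken $p$-reduced --- which is essentially the paper's lemma in disguise.

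Your final step (using Remark~\ref{rem:specialize} to lift nonvanishing of the degree from characteristic $p$ to characteristic $0$, then clearing the scalar factor) is correct and matches the paper.
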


\begin{proof}
Because $k$ has characteristic $0$, it is an extension of $\mathbb{Q}$. By the Chevalley extension theorem (see, e.g., \cite[Theorem 3.1.1]{ValuedFields}), we can extend the $p$-adic valuation on $\mathbb{Q}$ to a valuation on $k$. Let $\kappa$ denote the residue field. 

Let $\overline{H}(\Delta)$ be the Gorenstein quotient of the generic artinian reduction of ${k}(a_{i,j})[\Delta]$, and define $\overline{H}_{\kappa}(\Delta)$ similarly.
Choose a basis for $\overline{H}^m_{\kappa}(\Delta)$ and for $\overline{H}^{d - m}_{\kappa}(\Delta)$ consisting of monomials. Denote these bases by $v_1, \dotsc, v_r$ and $w_1, \dotsc, w_r$, where $r = \dim \overline{H}^m_{\kappa}(\Delta)$. Let $M_{\kappa}$ be the matrix whose $(i, j)$th entry is $\deg_{\kappa}(v_i \cdot w_j)$, i.e., using the degree map on $\overline{H}_{\kappa}(\Delta)$. 
By construction, $M_{\kappa}$ is invertible. Let $M$ be the matrix whose $(i, j)$th entry is $\deg(v_i \cdot w_j)$, i.e., using the degree map on $\overline{H}(\Delta)$. 
We obtain $M_{\kappa}$ from $M$ by reducing modulo $p$, see Remark~\ref{rem:specialize}, so $M$ is invertible. 
This implies that $\{v_1, \dotsc, v_r\}$ is linearly independent in $\overline{H}^m(\Delta)$, and so it is a basis for $\overline{H}^m(\Delta)$ by Corollary~\ref{cor:dimensioninvariant}. 

Let $\lambda \in {k}[a_{i,j}]$ be a polynomial such that, when we express $\lambda g$ in this basis, all coefficients are polynomials with coefficients in the valuation ring of ${k}$, and at least one of these polynomials has a coefficient whose image in $\kappa$ is nonzero. Let $\overline{\lambda g}$ denote the image of $\lambda g$ in $\overline{H}_{\kappa}(\Delta)$, which is nonzero. By Theorem~\ref{thm:4.4}, $\deg(\ell^{d - pm} \cdot  (\overline{\lambda g})^p)$ is nonzero. This implies that $\deg(\ell^{d - pm} \cdot (\lambda g)^p) = \lambda^p \deg(\ell^{d - pm} \cdot g^p)$ is nonzero, and so $\deg(\ell^{d - pm} \cdot g^p)$ is nonzero. 
\end{proof}

\begin{proof}[Proof of Theorem~\ref{thm:tpower}]
Let $p$ be a prime dividing $t$. By Proposition~\ref{prop:pansitropychar0}, $g^p$ is nonzero in $\overline{H}^{pm}(\Delta)$. Replacing $g$ by $g^p$ and continuing in this fashion gives the result. 
\end{proof}

\begin{remark}
The proof of Theorem~\ref{thm:tpower} works for any connected oriented simplicial $\mathbb{Q}$-homology manifold such that the homology of the link of any (possibly empty) face has no $p$-torsion for each prime $p$ dividing $t$. 
\end{remark}

\begin{example}\label{ex:RPd}
Let $\Delta$ be a triangulation of $\mathbb{R} \mathbb{P}^{d-1}$, where $d$ is even. Then $\Delta$ is a connected orientable simplicial $\Z$-homology manifold whose homology has no $3$-torsion. If $k$ has characteristic $0$ or $3$, then Proposition~\ref{prop:pansitropychar0} or Theorem~\ref{thm:4.4} implies that $\overline{H}(\Delta)$ has the weak Lefschetz property in degrees at most $\lfloor (d - 1)/3 \rfloor$, and so $\dim \overline{H}^0(\Delta) \le \dotsb \le \dim \overline{H}^{\lfloor (d - 1)/3 \rfloor}(\Delta)$. The results of \cite{PapadakisPetrotoug} or \cite{KaruXiaoAnisotropy} cannot be applied to such $\Delta$ because the dimension of $\overline{H}(\Delta)$ is different in characteristic $2$. 
\end{example}

\bibliography{pAnisotropy}
\bibliographystyle{amsalpha}

\end{document}